\newlength{\extramargin}
\newcommand{\Real}{\ensuremath{{\mathbb{R}}}}
\newcommand{\Complex}{\ensuremath{{\mathbb{C}}}}
\newcommand{\C}{\ensuremath{\mathcal C}}
\newcommand{\setP}{\ensuremath{\mathcal P}}
\newcommand{\A}{\ensuremath{\mathcal A}}
\newcommand{\V}{\ensuremath{\mathcal V}}
\newcommand{\setS}{\ensuremath{\mathcal S}}
\newcommand{\setE}{\ensuremath{\mathcal E}}
\newcommand{\G}{\ensuremath{\mathcal G}}
\newcommand{\F}{\ensuremath{\mathcal F}}
\newcommand{\T}{\ensuremath{\mathcal T}}
\renewcommand{\L}{\ensuremath{\mathcal L}}
\newcommand{\one}{\ensuremath{{\mathbf{1}}}}
\newtheorem{theorem}{Theorem}
\newtheorem{lemma}{Lemma}
\newtheorem{definition}{Definition}
\newtheorem{remark}{Remark}
\newtheorem{assumption}{Assumption}
\newenvironment{proof}{\noindent {\bf Proof.}}{\hfill \hspace*{1pt}\hfill$\blacksquare$}
\begin{document}
\title{Synchronization of oscillators not sharing a common ground}
\author{S. Emre Tuna\footnote{The author is with Department of
Electrical and Electronics Engineering, Middle East Technical
University, 06800 Ankara, Turkey. Email: {\tt etuna@metu.edu.tr}}}
\maketitle

\begin{abstract}
Networks of coupled LC oscillators that do not share a common ground
node are studied. Both resistive coupling and inductive coupling are
considered. For networks under resistive coupling, it is shown that
the oscillator-coupler interconnection has to be bilayer if the
oscillator voltages are to asymptotically synchronize. Also, for
bilayer architecture (when both resistive and inductive couplers are
present) a method is proposed to compute a complex-valued effective
Laplacian matrix that represents the overall coupling. It is proved
that the oscillators display synchronous behavior if and only if the
effective Laplacian has a single eigenvalue on the imaginary axis.
\end{abstract}

\section{Introduction}

A network of two-terminal electrical oscillators coupled via
two-terminal components gives rise to a pair of graphs. One of them
is the coupler graph, whose edges represent the couplers. The other
is the oscillator graph, where the edges stand for the oscillators.
As an illustration, let us reproduce in Fig.~\ref{fig:chua}a the
example array of six Chua's oscillators coupled via linear resistors
presented in \cite{wu01}. The corresponding coupler graph and
oscillator graph are given in Fig.~\ref{fig:chua}b and
Fig.~\ref{fig:chua}c, respectively. Note that, since one terminal of
each oscillator rests on the so called ground node
$\textcircled{g}$, the oscillator graph in this example is a
star.\footnote{I.e., a tree with diameter no larger than two.} Such
networks with a star oscillator graph have been considered, for
instance, in \cite{vandersteen06,dorfler14,narahara19,cejnar20}.
Another type of topology appears in the works
\cite{peles03,achanta18,liu20} which study synchronization in a
series-connected array of oscillators. Those networks enjoy an
oscillator graph that is a path; see Fig.~\ref{fig:vdp}.

\begin{figure}[h]
\begin{center}
\includegraphics[scale=0.5]{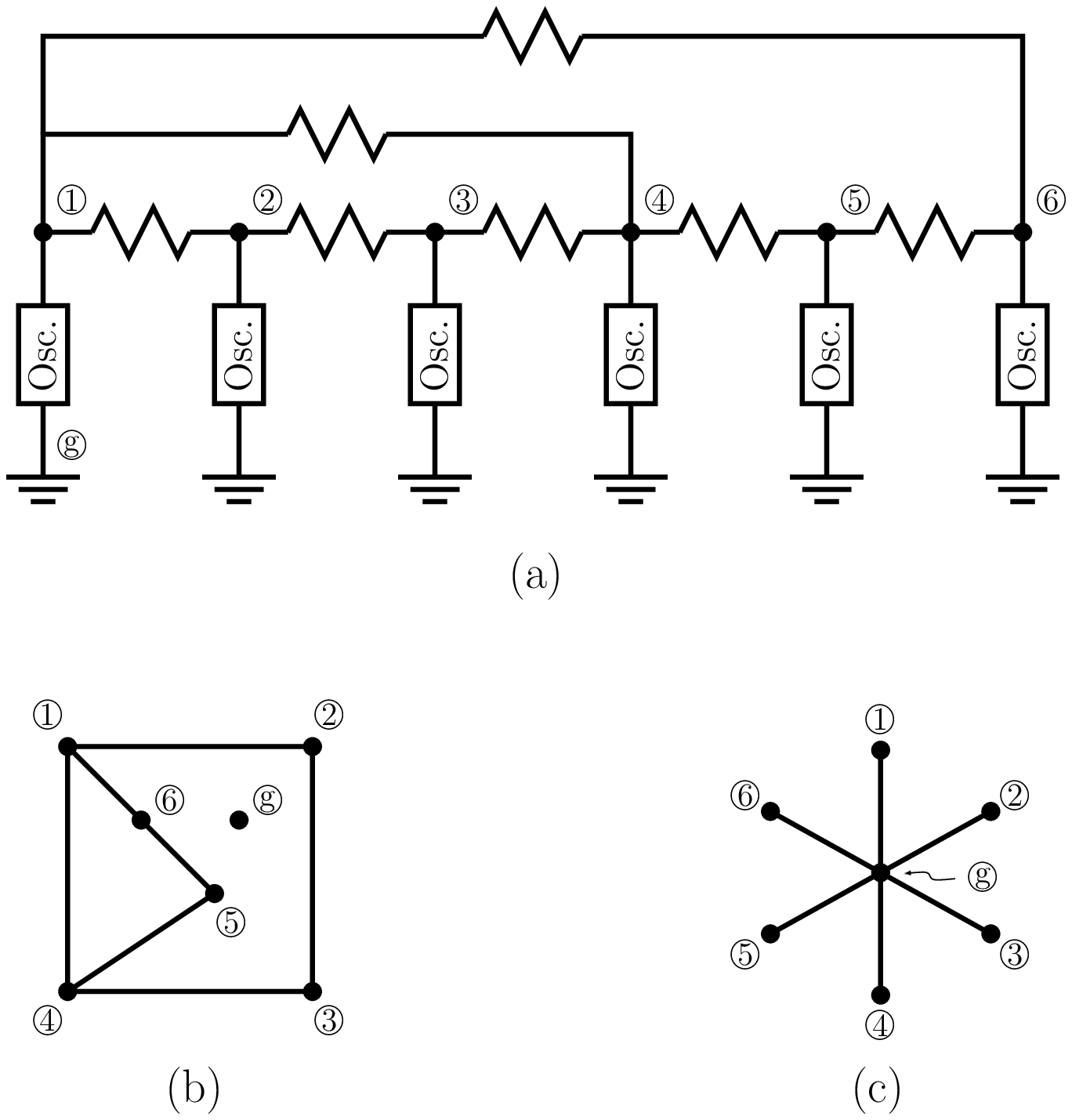}
\caption{(a) An array of coupled oscillators. (b) The coupler graph.
(c) The oscillator graph.}\label{fig:chua}
\end{center}
\end{figure}

The majority of research effort on electrical networks focuses on
those with a star oscillator graph. There is a good reason for that.
Man-made systems, be it a large power network or a tiny microchip,
are usually designed so that there is a common ground node in the
overall circuit to which the units (generators, oscillators, etc.)
are directly connected. Still, this real-world significance of
networks with a common ground, we believe, is no justification for
the general neglect in the literature of less restrictive
architectures. Let us briefly speculate why. Clearly, not having to
be confined to star topology means more flexibility both in design
and in analysis. Flexibility in design can be important, for
instance, if the system to be built is part of a microchip, where
the design constraints are already very tight. And, flexibility in
analysis may prove useful, e.g., in understanding natural phenomena:
Certain biological systems are long known to be able to be modeled
by interconnected electrical oscillators and it is very unlikely
that nature should have a certain preference for networks where the
oscillators share a common ground node. In addition to these
practical benefits, understanding the general case has value in its
own right. Motivated by these, we study in this paper the collective
behavior of coupled LC circuits without assuming that the oscillator
graph is a star; two simple examples are shown in
Fig.~\ref{fig:cross}.

\begin{figure}[h]
\begin{center}
\includegraphics[scale=0.5]{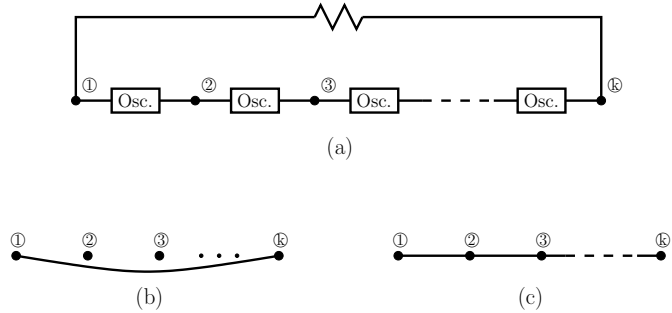}
\caption{(a) An array of series-connected oscillators. (b) The
coupler graph. (c) The oscillator graph.}\label{fig:vdp}
\end{center}
\end{figure}

\begin{figure}[h]
\begin{center}
\includegraphics[scale=0.5]{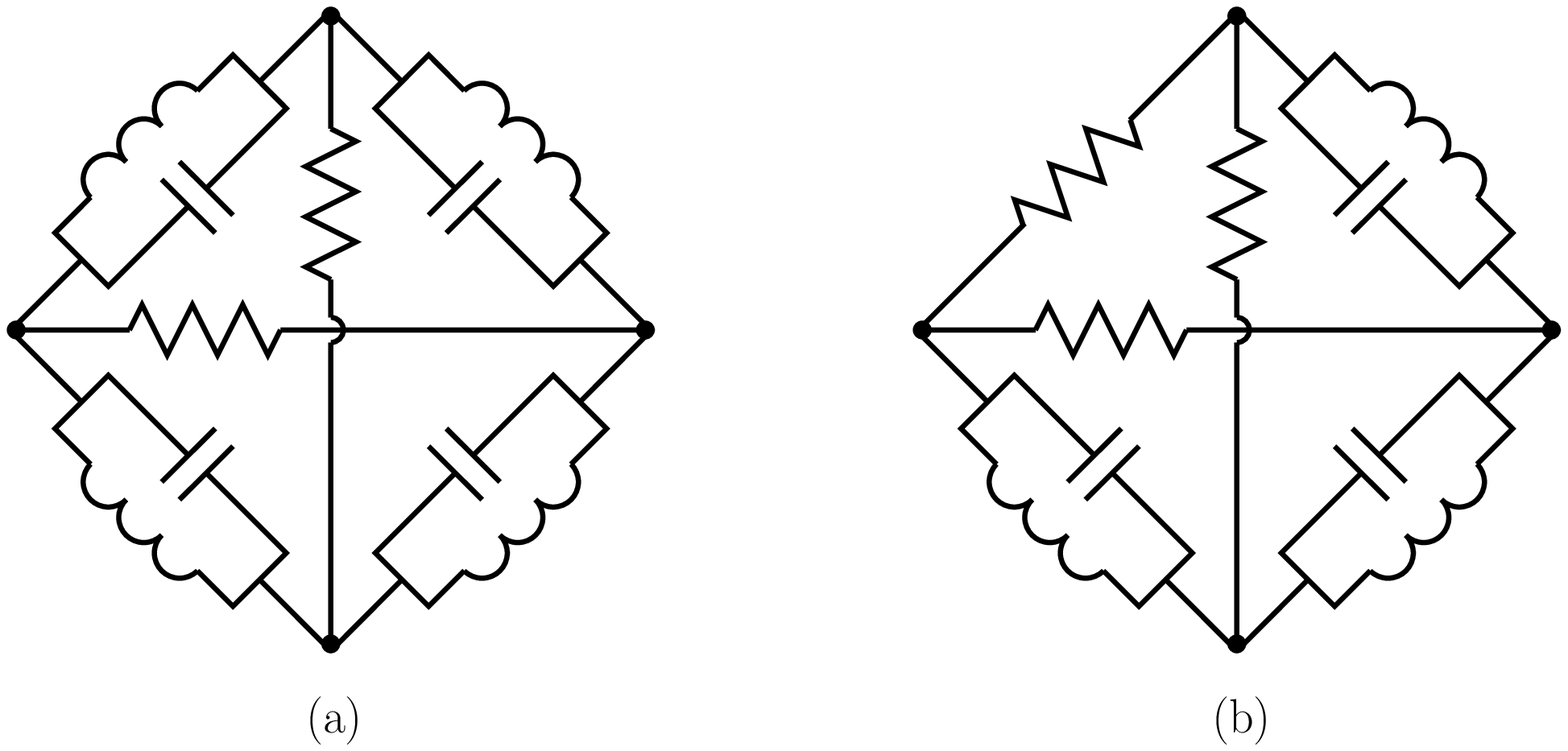}
\caption{Networks of coupled LC tanks with nonstar oscillator
graph.}\label{fig:cross}
\end{center}
\end{figure}

We begin our investigation by studying the linear time invariant
(LTI) network of identical LC oscillators coupled by resistors only.
We show that such a network displays synchronous behavior if and
only if the oscillators communicate through a bilayer coupling
structure (see Fig.~\ref{fig:bilayer}) and the graphs representing
the layers are both connected. In the second half of the paper we
focus our attention on a more general situation, where not only
resistors but also inductors are allowed as couplers. In this case
the overall coupling gives rise to four Laplacian matrices
$G_{1},\,G_{2},\,B_{1},\,B_{2}$. The matrices $G_{1}$ and $G_{2}$
represent the resistors in the first and second layers,
respectively; $B_{1}$ and $B_{2}$ represent the inductors in the
first and second layers, respectively. We propose a method to
generate a single complex matrix (which we call the effective
Laplacian) out of these four real matrices\footnote{The situation is
a bit subtler. There are indeed six (not four) matrices to be taken
into consideration. The details are given in
Section~\ref{sec:reig}.} and study some of its properties. Then we
show that the oscillators asymptotically synchronize if and only if
this effective Laplacian has a single eigenvalue on the imaginary
axis.

\begin{figure}[h]
\begin{center}
\includegraphics[scale=0.5]{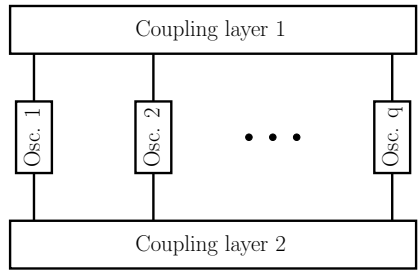}
\caption{A network of oscillators with bilayer coupling
structure.}\label{fig:bilayer}
\end{center}
\end{figure}

Possible contributions of this paper are intended to be in two
places. First. Through this preliminary work, we hope to draw
attention to nonstar networks and the possible riches they may
contain. At first, their apparent lack of structure might suggest
that nothing interesting will come out of them. To this view,
however, we present a counterevidence in this paper. As mentioned
earlier, when one attempts to extract synchronous behavior from a
nonstar network, an elegant structure (bilayer coupling) emerges as
a necessary condition (Theorem~\ref{thm:bl}). Possibly, other
different phenomena flourish on other interesting structures; and
considering networks where oscillators do not share a common ground
node would allow one to discover those interconnection forms. This
is somewhat in contrast to the general style in the literature,
where the analysis usually rests on an initially assumed
architecture, e.g., networks with star oscillator graph. Second. We
make a first step toward a systematic approach (which involves
studying the spectral properties of the effective Laplacian) for
understanding the joint tendencies of electrical oscillators under
bilayer coupling. To the best of our knowledge, this is a novelty,
for the mature literature on synchronization of harmonic oscillators
(see, e.g., \cite{ren08,su09,zhou12,tuna17}) does not seem to
provide one with off-the-shelf tools to determine the asymptotic
behavior of coupled LC tanks in the absence of a common ground node.

\section{Notation}

Let $I\in\Real^{n\times n}$ denote the identity matrix and
$I(:,\,r)\in\Real^{n}$ be its $r^{\rm th}$ column. The vector of all
ones is denoted by $\one_{q}\in\Real^{q}$. The set of eigenvalues of
a square matrix $P$ is denoted by ${\rm eig}(P)$ and its
pseudoinverse by $P^{+}$. A matrix $F\in\Real^{r\times\ell}$ is said
to be class~$\F$ if it satisfies the following properties: (i) each
entry of $F$ is either 0 or 1, (ii) each column of $F$ contains a
single nonzero entry, and (iii) $F$ has no zero rows. Note that
$F^{T}\one_{r}=\one_{\ell}$ when $F$ is class~$\F$. In this paper we
assume that the reader is familiar with some basic graph theoretic
terms such as {\em tree}, {\em path}, and {\em cycle}.

\section{Problem statement}

Recall that the voltage $v_{k}\in\Real$ of an uncoupled LC
oscillator satisfies $c{\ddot v}_{k}+l^{-1}v_{k}=0$, where $l,\,c>0$
are the associated inductance and capacitance, respectively.
Consider now a network of $q$ identical LC oscillators coupled by
LTI resistors and inductors. This network has $n$ nodes (denoted by
$\nu_{1},\,\nu_{2},\,\ldots,\,\nu_{n}$) and each node is assumed to
be incident to at least one oscillator. The node voltages (defined
with respect to some arbitrary reference point) are denoted by
$e_{1},\,e_{2},\,\ldots,\,e_{n}\in\Real$. Let $A\in\Real^{n\times
q}$ be the (oriented) incidence matrix associated to the
oscillators. This matrix is constructed as follows. Suppose the
$k^{\rm th}$ oscillator extends between the nodes $\nu_{r}$ and
$\nu_{s}$\footnote{We allow at most one oscillator extending between
any pair of nodes. In other words, no two oscillators can be
parallel.}; and the positive terminal of the oscillator rests on
$\nu_{r}$ while the negative terminal on $\nu_{s}$. (Note that this
implies: the voltage $v_{k}$ of the $k^{\rm th}$ oscillator reads
$v_{k}=e_{r}-e_{s}$.) Then the $k^{\rm th}$ column of $A$ satisfies
$A(:,\,k)=I(:,\,r)-I(:,\,s)$. Since each node is incident to at
least one oscillator, $A$ has no zero rows. We denote the $sk^{\rm
th}$ entry of $A$ by $\alpha_{sk}$. Let $g_{sr}=g_{rs}\geq 0$ be the
conductance of the resistor connecting the nodes $\nu_{r}$ and
$\nu_{s}$. ($g_{rs}=0$ means there is no resistor between $\nu_{r}$
and $\nu_{s}$.) Likewise, let $b_{sr}=b_{rs}\geq 0$ denote the
reciprocal ($b_{rs}=l_{rs}^{-1}$) of the inductance $l_{rs}$ of the
inductor connecting $\nu_{r}$ and $\nu_{s}$.\footnote{Recall that
the susceptance of an LTI inductor with inductance $l_{rs}$ at some
frequency $\omega$ equals $-(\omega l_{rs})^{-1}$. Hence the
parameter $b_{rs}$ is the magnitude of the susceptance of the
inductor at the frequency $\omega=1\,$rad/s.} ($b_{rs}=0$ means
there is no inductor between $\nu_{r}$ and $\nu_{s}$.) The set of
equations describing the evolution of this network then reads
\begin{subeqnarray}\label{eqn:motion}
\sum_{k=1}^{q}\alpha_{rk}(c{\ddot v}_{k}+l^{-1}
v_{k})+\sum_{s=1}^{n}g_{rs}({\dot e}_{r}-{\dot
e}_{s})+\sum_{s=1}^{n}b_{rs}(e_{r}-e_{s})=0\,,\qquad
r=1,\,2,\,\ldots,\,n\\
v_{k}=\sum_{r=1}^{n}\alpha_{rk}e_{r}\,,\qquad k=1,\,2,\,\ldots,\,q
\end{subeqnarray}

\begin{definition}\label{def:sync}
The network of oscillators~\eqref{eqn:motion} is said to be {\em
(nontrivially) synchronous} if $|v_{k}(t)|-|v_{\ell}(t)|\to 0$ as
$t\to\infty$ for all pairs $(k,\,\ell)$ and all initial conditions;
and $v_{k}(t)\not\to 0$ for some $k$ and some initial conditions.
\end{definition}

\begin{remark}\label{rem:polarity}
If the network~\eqref{eqn:motion} is synchronous, then the new
network obtained by reversing the polarities of some oscillator
voltages (i.e., letting $v_{k}^{\rm new}=-v_{k}$ for some $k$) is
still synchronous. In other words, whether a network is synchronous
or not is independent of how the oscillator voltage polarities are
chosen.
\end{remark}

A simple example for a synchronous network is given in
Fig.~\ref{fig:cross}a, where the oscillator voltages (nontrivially)
satisfy $|v_{k}(t)|-|v_{\ell}(t)|\to 0$. The network in
Fig.~\ref{fig:cross}b, however, is not synchronous because
$v_{k}(t)\to 0$ for all $k$.

To simplify analysis let us first construct the vectors $v=[v_{1}\
v_{2}\ \cdots\ v_{q}]^{T}$ and $e=[e_{1}\ e_{2}\ \cdots\
e_{n}]^{T}$. These two vectors are related to one another through
the identity $v=A^{T}e$. Secondly, let us introduce the $n\times n$
Laplacian matrices
\begin{eqnarray*}
G =
\left[\begin{array}{cccc}\sum_{s}g_{1s}&-g_{12}&\cdots&-g_{1n}\\
-g_{21}&\sum_{s}g_{2s}&\cdots&-g_{2n}\\
\vdots&\vdots&\ddots&\vdots\\
-g_{n1}&-g_{n2}&\cdots&\sum_{s}g_{ns}\end{array}\right]\,,\qquad B =
\left[\begin{array}{cccc}\sum_{s}b_{1s}&-b_{12}&\cdots&-b_{1n}\\
-b_{21}&\sum_{s}b_{2s}&\cdots&-b_{2n}\\
\vdots&\vdots&\ddots&\vdots\\
-b_{n1}&-b_{n2}&\cdots&\sum_{s}b_{ns}\end{array}\right]\,.
\end{eqnarray*}
Observe that $G$ and $B$ are symmetric and positive semidefinite.
Thirdly, we let (without loss of generality) $c=1$ and define
$\omega_{0}=1/\sqrt{lc}$. We can now rewrite \eqref{eqn:motion} as
\begin{subeqnarray}\label{eqn:nonstar}
AA^{T}({\ddot e}+\omega_{0}^{2}e)+G{\dot e}+Be&=&0\\
v&=&A^{T}e\,.
\end{subeqnarray}
The problem we intend to solve in this paper is this.

\begin{center}
{\em Find conditions on the triple $(A,\,G,\,B)$ under which the
network~\eqref{eqn:motion} is synchronous.}
\end{center}

Note that the network we study comprises LTI passive components.
Therefore it is obvious physically that the solutions have to be
bounded. The following remark formalizes this simple observation and
will prove useful for later analysis.

\begin{remark}\label{rem:passive}
Let $v(t)=A^{T}e(t)$ be an arbitrary solution of the
network~\eqref{eqn:nonstar}. Construct the nonnegative function
$W(t)=\frac{1}{2}e(t)^{T}Be(t)+\frac{1}{2}\omega_{0}^{2}v(t)^{T}v(t)+\frac{1}{2}{\dot
v}(t)^{T}{\dot
v}(t)=\frac{1}{2}e(t)^{T}Be(t)+\frac{1}{2}\omega_{0}^{2}e(t)^{T}AA^{T}e(t)+\frac{1}{2}{\dot
e}(t)^{T}AA^{T}{\dot e}(t)$. Computing the time derivative of $W$
along the solutions of \eqref{eqn:nonstar} we obtain
\begin{eqnarray}\label{eqn:Wdot}
{\dot W}
&=&{\dot e}^{T}Be+\omega_{0}^{2}{\dot e}^{T}AA^{T}e+{\dot e}^{T}AA^{T}{\ddot e}\nonumber\\
&=&{\dot e}^{T}(AA^{T}({\ddot e}+\omega_{0}^{2}e)+Be)\nonumber\\
&=&-{\dot e}^{T}G{\dot e}=-\sum_{r<s}g_{rs}({\dot e}_{r}-{\dot
e}_{s})^{2}\,.
\end{eqnarray}
Hence ${\dot W}\leq 0$, meaning $W$ is nonincreasing. As a result,
the solution $v(t)$ has to be bounded. Since produced by an LTI
system, $v(t)$ can be written as a sum (of finitely many terms)
\begin{eqnarray}\label{eqn:sum}
v(t)=\sum_{k}{\rm Re}\left(\pi_{k}(t)e^{\lambda_{k}t}\right)
\end{eqnarray}
where $\pi_{k}(t)$ are polynomials with vector coefficients and
$\lambda_{k}\in\Complex$ are distinct. That $v(t)$ is bounded
therefore implies ${\rm Re}(\lambda_{k})\leq 0$ for all $k$; and
when ${\rm Re}(\lambda_{k})=0$ for some $k$ the corresponding
polynomial $\pi_{k}(t)$ must be of degree zero, i.e., a constant
vector.
\end{remark}

\section{Linkage}

Some of the conditions (for synchronization) we present are of
structural nature and require the introduction of a graph-like
object associated to the network~\eqref{eqn:motion}. It is defined
as follows. Let $\V=\{\nu_{1},\,\nu_{2},\,\ldots,\,\nu_{n}\}$ be the
set of nodes of our network. Over this node set we construct two
(undirected) graphs. The first one is the oscillator graph
$(\V,\,\setE_{\rm o})$, where the edge set $\setE_{\rm o}$ contains
$q\geq 2$ unordered distinct pairs $\{\nu_{r},\,\nu_{s}\}\subset\V$
of nodes between which there is an oscillator in the network. In
other words, $\{\nu_{r},\,\nu_{s}\}\in\setE_{\rm o}$ if the
incidence matrix $A$ has a column that reads either
$A(:,\,k)=I(:,\,r)-I(:,\,s)$ or $A(:,\,k)=I(:,\,s)-I(:,\,r)$. Since
each node in the network is incident to at least one oscillator, the
graph $(\V,\,\setE_{\rm o})$ has no isolated nodes. The second graph
is the coupler graph $(\V,\,\setE_{\rm c})$, where
$\{\nu_{r},\,\nu_{s}\}\in\setE_{\rm c}$ when $g_{rs}+b_{rs}>0$. We
now combine these two graphs into a single object $(\V,\,\setE_{\rm
o},\,\setE_{\rm c})$, which we call the {\em linkage} of the
network~\eqref{eqn:nonstar}.

Recall that a graph $(\V,\,\setE)$ is {\em bipartite} if we can find
two disjoint sets of nodes $\V_{1},\,\V_{2}\subset\V$ such that
$\V_{1}\cup\V_{2}=\V$ and each edge $\epsilon\in\setE$ extends
between $\V_{1}$ and $\V_{2}$, i.e., we can write
$\epsilon=\{\nu_{r},\,\nu_{s}\}$ for some $\nu_{r}\in\V_{1}$ and
$\nu_{s}\in\V_{2}$. Such pair $(\V_{1},\,\V_{2})$ is called a {\em
bipartition} of $(\V,\,\setE)$, which is not necessarily unique. An
equivalent definition would have been the following: A graph is
bipartite if it has no odd cycles \cite{asratian98}. Below we
introduce a generalization of this.

\begin{definition}
The linkage $(\V,\,\setE_{\rm o},\,\setE_{\rm c})$ is said to be
{\em bipartite} if $\setE_{\rm o}\cap\setE_{\rm c}=\emptyset$ and no
cycle of the graph $(\V,\,\setE_{\rm o}\cup\setE_{\rm c})$ contains
an odd number of edges from $\setE_{\rm o}$.
\end{definition}

For instance, the linkage of the network in Fig.~\ref{fig:cross}a is
bipartite, whereas that of the network in Fig.~\ref{fig:cross}b is
not. The sister definition is given next.

\begin{definition}
The linkage $(\V,\,\setE_{\rm o},\,\setE_{\rm c})$ is said to be
{\em bilayer} if the graph $(\V,\,\setE_{\rm o})$ has a bipartition
$(\V_{1},\,\V_{2})$ such that no edge in $\setE_{\rm c}$ extends
between $\V_{1}$ and $\V_{2}$. When this holds there exist two
disjoint subgraphs $(\V_{1},\,\setE_{1})$ and
$(\V_{2},\,\setE_{2})$, called the {\em layers}, satisfying
$(\V_{1},\,\setE_{1})\cup(\V_{2},\,\setE_{2})=(\V,\,\setE_{\rm c})$.
\end{definition}

We now establish that bipartite and bilayer mean the same thing.

\begin{lemma}\label{lem:equivalence}
The linkage $(\V,\,\setE_{\rm o},\,\setE_{\rm c})$ is bipartite if
and only if it is bilayer.
\end{lemma}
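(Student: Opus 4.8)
The plan is to prove the two implications separately, in each case exhibiting the combinatorial witness that one definition supplies and checking it meets the requirements of the other. Throughout I would work with the graph $(\V,\,\setE_{\rm o}\cup\setE_{\rm c})$ and use the standard fact that bipartiteness of an ordinary graph is equivalent to the absence of odd cycles. Note that both definitions entail $\setE_{\rm o}\cap\setE_{\rm c}=\emptyset$: this is explicit in the definition of bipartite linkage, and for a bilayer linkage it follows because each edge of $\setE_{\rm c}$ lies inside $\V_{1}$ or inside $\V_{2}$ while each edge of $\setE_{\rm o}$ crosses between them, so the two edge sets are disjoint. Hence in both directions I may assume this disjointness for free.

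\textbf{Bilayer $\Rightarrow$ bipartite.}
Suppose $(\V_{1},\,\V_{2})$ is a bipartition of $(\V,\,\setE_{\rm o})$ with no $\setE_{\rm c}$-edge crossing it. Take any cycle $C$ in $(\V,\,\setE_{\rm o}\cup\setE_{\rm c})$ and traverse it, recording for each vertex its membership in $\V_{1}$ or $\V_{2}$. Each edge of $C$ lying in $\setE_{\rm o}$ switches the side; each edge lying in $\setE_{\rm c}$ keeps the side. Since the walk returns to its starting vertex, the number of side-switches must be even; that is, the number of $\setE_{\rm o}$-edges in $C$ is even. This is exactly the defining condition for the linkage to be bipartite.

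\textbf{Bipartite $\Rightarrow$ bilayer.}
This is the direction where I expect the real work to lie. Assume no cycle of $(\V,\,\setE_{\rm o}\cup\setE_{\rm c})$ contains an odd number of $\setE_{\rm o}$-edges; I must construct the bipartition of $(\V,\,\setE_{\rm o})$ that no $\setE_{\rm c}$-edge crosses. The natural idea is to assign signs to vertices by a parity argument: within each connected component of $(\V,\,\setE_{\rm o}\cup\setE_{\rm c})$, fix a root $\nu$, set its sign to $+$, and for any other vertex $\mu$ pick a walk from $\nu$ to $\mu$ and define the sign of $\mu$ to be $+$ or $-$ according to the parity of the number of $\setE_{\rm o}$-edges on that walk. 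The obstacle is well-definedness: two walks from $\nu$ to $\mu$ differ by a closed walk, which decomposes into cycles, and the hypothesis guarantees each cycle carries an even number of $\setE_{\rm o}$-edges — but I should be a little careful that a closed \emph{walk} (not a simple cycle) also has even $\setE_{\rm o}$-count. This follows since any closed walk's edge multiset, counted mod $2$, is a sum of simple cycles (an Eulerian-type / cycle-space argument over $\mathbb F_{2}$), or alternatively by induction on walk length, splitting off the first repeated vertex. Once the sign function $\sigma:\V\to\{+,-\}$ is well-defined, let $\V_{1}=\sigma^{-1}(+)$ and $\V_{2}=\sigma^{-1}(-)$. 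Every $\setE_{\rm o}$-edge $\{\nu_{r},\nu_{s}\}$ flips the sign, so it crosses between $\V_{1}$ and $\V_{2}$; hence $(\V_{1},\V_{2})$ is a bipartition of $(\V,\,\setE_{\rm o})$. Every $\setE_{\rm c}$-edge preserves the sign, so both its endpoints lie on the same side; hence no $\setE_{\rm c}$-edge crosses, which is precisely the bilayer condition, and the induced subgraphs on $\V_{1}$ and $\V_{2}$ are the two layers. The only subtlety beyond bookkeeping is handling several connected components — each is treated independently, with an arbitrary root sign — and making sure the final global bipartition is still a valid bipartition of $(\V,\,\setE_{\rm o})$, which it is since every $\setE_{\rm o}$-edge lies within a single component.
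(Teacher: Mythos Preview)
Your proof is correct and follows essentially the same strategy as the paper's: both directions use the parity of $\setE_{\rm o}$-edges along walks to pass between the bipartition and the cycle condition. The only minor technical difference is that for the direction BP $\Rightarrow$ BL the paper fixes a spanning tree of each component and uses the unique tree-paths to define the labeling (then checks consistency for non-tree edges by a two-case analysis), whereas you define the sign via arbitrary walks and appeal to a cycle-space/induction argument for well-definedness; these are interchangeable standard devices and yield the same bipartition.
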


\begin{proof}
Given the linkage $\L=(\V,\,\setE_{\rm o},\,\setE_{\rm c})$, let the
graph $\G=(\V,\,\setE_{\rm o}\cup\setE_{\rm c})$ has $\kappa$
components. This means we can find disjoint node sets
$\V^{1},\,\V^{2},\,\ldots,\,\V^{\kappa}\subset\V$ as well as edge
sets $\setE_{\rm o}^{1},\,\setE_{\rm o}^{2},\,\ldots,\,\setE_{\rm
o}^{\kappa}\subset\setE_{\rm o}$ and $\setE_{\rm c}^{1},\,\setE_{\rm
c}^{2},\,\ldots,\,\setE_{\rm c}^{\kappa}\subset\setE_{\rm c}$ such
that $\G=\bigcup_{i=1}^{\kappa}(\V^{i},\,\setE_{\rm
o}^{i}\cup\setE_{\rm c}^{i})$. It is not difficult to see that $\L$
is bipartite (bilayer) if and only if all the triples
$(\V^{i},\,\setE_{\rm o}^{i},\,\setE_{\rm c}^{i})$ are separately
bipartite (bilayer). Hence, without loss of generality, we
henceforth assume $\kappa=1$, i.e., $\G$ is connected.

{\em Part I: BP$\implies$BL.} Suppose $\L$ is bipartite. Let $\T$ be
a tree of $\G$. Now, using $\T$ we partition the node set
$\V=\{\nu_{1},\,\nu_{2},\,\ldots,\,\nu_{n}\}$ into two disjoint sets
$\V_{1},\,\V_{2}\subset\V$ as follows. We begin with letting
$\nu_{1}\in\V_{1}$. Then for $r=2,\,3,\,\ldots,\,n$ the node
$\nu_{r}$ is made belong to $\V_{1}$ if the (unique) path (in $\T$)
connecting $\nu_{1}$ to $\nu_{r}$ (denoted
$\setP_{\nu_{1}\to\nu_{r}}$) contains an even number of edges from
$\setE_{\rm o}$ (called {\em o-edges}). Once such construction of
$\V_{1}$ is complete, we let $\V_{2}=\V\setminus\V_{1}$. Now, our
claim is that the pair $(\V_{1},\,\V_{2})$ has to be a bipartition
of the graph $(\V,\,\setE_{\rm o})$. To see that, suppose otherwise.
That is, there exists an edge $\{\nu_{r},\,\nu_{s}\}\in\setE_{\rm
o}$ such that either $\nu_{r},\,\nu_{s}\in\V_{1}$ or
$\nu_{r},\,\nu_{s}\in\V_{2}$. Without loss of generality let both
$\nu_{r}$ and $\nu_{s}$ belong to $\V_{1}$, which implies both paths
$\setP_{\nu_{1}\to\nu_{r}}$ and $\setP_{\nu_{1}\to\nu_{s}}$ contain
an even number of o-edges. Let us consider the following two cases
separately. {\em Case 1: The edge $\{\nu_{r},\,\nu_{s}\}$ belongs to
the tree $\T$.} Note that we either have
$\setP_{\nu_{1}\to\nu_{s}}=\setP_{\nu_{1}\to\nu_{r}}\cup\setP_{\nu_{r}\to\nu_{s}}$
or
$\setP_{\nu_{1}\to\nu_{r}}=\setP_{\nu_{1}\to\nu_{s}}\cup\setP_{\nu_{s}\to\nu_{r}}$.
Without loss of generality let the former be the case. Then, since
$\{\nu_{r},\,\nu_{s}\}\in\setE_{\rm o}$, if the number of o-edges in
the path $\setP_{\nu_{1}\to\nu_{r}}$ is $p$ then the path
$\setP_{\nu_{1}\to\nu_{s}}$ must contain $p+1$ o-edges. Since $p$
and $p+1$ have different parities, it is not possible that both are
even. That is, $\nu_{r}$ and $\nu_{s}$ cannot both belong to
$\V_{1}$. Having ruled out this case let us now consider the other
scenario. {\em Case 2: The edge $\{\nu_{r},\,\nu_{s}\}$ does not
belong to the tree $\T$.} Then there is a unique cycle $\C$ of $\G$
with the following property. The cycle $\C$ contains the edge
$\{\nu_{r},\,\nu_{s}\}$ and all its other edges belong to the tree
$\T$. Let $\nu_{u}$ be the node of $\C$ such that the path
$\setP_{\nu_{1}\to\nu_{u}}$ is shorter than $\setP_{\nu_{1}\to\nu}$
for any other node $\nu$ in $\C$. Note that both paths
$\setP_{\nu_{1}\to\nu_{r}}$ and $\setP_{\nu_{1}\to\nu_{s}}$ should
pass from the vertex $\nu_{u}$, i.e., we have
\begin{subeqnarray}\label{eqn:path}
\setP_{\nu_{1}\to\nu_{r}}&=&\setP_{\nu_{1}\to\nu_{u}}\cup\setP_{\nu_{u}\to\nu_{r}}\\
\setP_{\nu_{1}\to\nu_{s}}&=&\setP_{\nu_{1}\to\nu_{u}}\cup\setP_{\nu_{u}\to\nu_{s}}\,.
\end{subeqnarray}
Let now $p_{1}$ and $p_{2}$ be the numbers of o-edges that
$\setP_{\nu_{u}\to\nu_{r}}$ and $\setP_{\nu_{u}\to\nu_{s}}$ contain,
respectively. Since both $\nu_{r}$ and $\nu_{s}$ belong to $\V_{1}$,
the numbers $p_{1}$ and $p_{2}$ must have the same parity by
\eqref{eqn:path}. Hence $p_{1}+p_{2}$ must be even. Note that
$\C=\setP_{\nu_{u}\to\nu_{r}}\cup\setP_{\nu_{r}\to\nu_{s}}\cup\setP_{\nu_{s}\to\nu_{u}}$.
Therefore (thanks to $\{\nu_{r},\,\nu_{s}\}\in\setE_{\rm o}$) the
number of o-edges in the cycle $\C$ must be $p_{1}+p_{2}+1$, which
is an odd number. But this contradicts the fact that $\L$ is
bipartite.

Having shown that $(\V_{1},\,\V_{2})$ is a bipartition of the graph
$(\V,\,\setE_{\rm o})$, we now establish the second condition for
$\L$ to be bilayer. That is, no edge in $\setE_{\rm c}$ (called {\em
c-edge}) extends between $\V_{1}$ and $\V_{2}$. The demonstration is
very similar to that of the first condition. Once again we employ
contradiction. Suppose there is an edge
$\{\nu_{r},\,\nu_{s}\}\in\setE_{\rm c}$ such that $\nu_{r}\in\V_{1}$
and $\nu_{s}\in\V_{2}$. Let us first study {\em Case 1: The edge
$\{\nu_{r},\,\nu_{s}\}$ belongs to the tree $\T$.} Then both paths
$\setP_{\nu_{1}\to\nu_{r}}$ and $\setP_{\nu_{1}\to\nu_{s}}$ must
contain the same number of o-edges, which implies either both
$\nu_{r}$ and $\nu_{s}$ belong to $\V_{1}$ or neither do. But this
cannot coexist with our initial assumption that $\nu_{r}\in\V_{1}$
and $\nu_{s}\in\V_{2}$. Now we consider {\em Case 2: The edge
$\{\nu_{r},\,\nu_{s}\}$ does not belong to the tree $\T$.} Then
there is a unique cycle $\C$ of $\G$ with the following property.
The cycle $\C$ contains the edge $\{\nu_{r},\,\nu_{s}\}$ and all its
other edges belong to the tree $\T$. Let $\nu_{u}$ be the node of
$\C$ such that the path $\setP_{\nu_{1}\to\nu_{u}}$ is shorter than
$\setP_{\nu_{1}\to\nu}$ for any other node $\nu$ in $\C$. Note that
both paths $\setP_{\nu_{1}\to\nu_{r}}$ and
$\setP_{\nu_{1}\to\nu_{s}}$ should pass from the vertex $\nu_{u}$,
i.e., we have \eqref{eqn:path}. Let now $p_{1}$ and $p_{2}$ be the
numbers of o-edges that $\setP_{\nu_{u}\to\nu_{r}}$ and
$\setP_{\nu_{u}\to\nu_{s}}$ contain, respectively. Since
$\nu_{r}\in\V_{1}$ and $\nu_{s}\in\V_{2}$, the numbers $p_{1}$ and
$p_{2}$ must have different parities by \eqref{eqn:path}. Hence
$p_{1}+p_{2}$ must be odd. Note that
$\C=\setP_{\nu_{u}\to\nu_{r}}\cup\setP_{\nu_{r}\to\nu_{s}}\cup\setP_{\nu_{s}\to\nu_{u}}$.
Therefore the number of o-edges in the cycle $\C$ must be
$p_{1}+p_{2}$, which is an odd number. But this contradicts the fact
that $\L$ is bipartite. To sum up, we have established that the pair
$(\V_{1},\,\V_{2})$ makes a bipartition of the graph
$(\V,\,\setE_{\rm o})$ and that no c-edge extends between $\V_{1}$
and $\V_{2}$. Hence the linkage $\L=(\V,\,\setE_{\rm o},\,\setE_{\rm
c})$ is bilayer. In the second part of the proof we show the other
direction.

{\em Part II: BL$\implies$BP.} Suppose $\L$ is bilayer. Then the
graph $(\V,\,\setE_{\rm o})$ has a bipartition $(\V_{1},\,\V_{2})$
such that no c-edge extends between $\V_{1}$ and $\V_{2}$. Let $\C$
be a cycle of the graph $\G$ with the node sequence
$(\nu_{r_{1}},\,\nu_{r_{2}},\,\nu_{r_{3}},\,\ldots,\,\nu_{r_{\alpha}},\,\nu_{r_{1}})$
where each consecutive pair of nodes
$\{\nu_{r_{\beta}},\,\nu_{r_{\beta+1}}\}$ (letting
$r_{\alpha+1}=r_{1}$) is an edge of $\G$. Note that
$\{\nu_{r_{\beta}},\,\nu_{r_{\beta+1}}\}$ is an o-edge if the
constituent nodes belong to different node subsets:
$\nu_{r_{\beta}}\in\V_{1}$ and $\nu_{r_{\beta+1}}\in\V_{2}$ or vice
versa. Otherwise, i.e., when
$\nu_{r_{\beta}},\,\nu_{r_{\beta+1}}\in\V_{1}$ or
$\nu_{r_{\beta}},\,\nu_{r_{\beta+1}}\in\V_{2}$, the edge
$\{\nu_{r_{\beta}},\,\nu_{r_{\beta+1}}\}$ is a c-edge. That the
sequence
$(\nu_{r_{1}},\,\nu_{r_{2}},\,\nu_{r_{3}},\,\ldots,\,\nu_{r_{\alpha}},\,\nu_{r_{1}})$
begins and ends with the same node has one obvious implication: the
number of transitions between $\V_{1}$ and $\V_{2}$ while one
traverses this sequence must always be even. Since the number of
transitions equals the number of o-edges, we conclude that the
number of o-edges in the cycle $\C$ is even. This is true for any
cycle because $\C$ was arbitrary. Therefore $\L$ is bipartite.
\end{proof}

\section{Purely resistive coupling}

In this section we focus on the special case where the LC
oscillators are coupled via resistors only. Namely, we study the
network~\eqref{eqn:nonstar} under $B=0$. Consider therefore
\begin{eqnarray}\label{eqn:nonstares}
AA^{T}({\ddot e}+\omega_{0}^{2}e)+G{\dot e}=0\,,\qquad v=A^{T}e\,.
\end{eqnarray}
We will show that whether the network~\eqref{eqn:nonstares} is
synchronous or not depends only on the structure of the coupling.
This would imply that two separate (resistively coupled) networks
are both synchronous if they share the same linkage. In
Section~\ref{sec:ex} we will see that this is not the case for the
general situation where inductive coupling is also present. The
following lemma is the first step toward the main result
(Theorem~\ref{thm:bl}) of this section.

\begin{lemma}\label{lem:necessary}
The network~\eqref{eqn:nonstares} is synchronous only when its
linkage is bipartite.
\end{lemma}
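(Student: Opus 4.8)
The plan is to prove the contrapositive: if the linkage is not bipartite, then~\eqref{eqn:nonstares} is not (nontrivially) synchronous. The first task is to pin down the asymptotic form of $v$. By Remark~\ref{rem:passive} every solution is bounded and $v(t)=\sum_k\mathrm{Re}(\pi_k(t)e^{\lambda_k t})$ with $\mathrm{Re}(\lambda_k)\le 0$, the terms with $\mathrm{Re}(\lambda_k)=0$ having constant coefficient $\pi_k$. Substituting $e(t)=pe^{\lambda t}$ into~\eqref{eqn:nonstares} gives $((\lambda^2+\omega_0^2)AA^T+\lambda G)p=0$; for $\lambda=j\omega$ with $\omega\in\Real$, left-multiplying by $p^\ast$ and separating real and imaginary parts yields $(\omega_0^2-\omega^2)\|A^Tp\|^2=0$ and $\omega\,p^\ast Gp=0$. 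Since $\omega_0\neq 0$, an imaginary-axis mode can contribute to the output $v=A^Te$ only at $\omega=\pm\omega_0$, and it must then satisfy $Gp=0$; conversely, for every real $p\in\ker G$ one checks directly that $e(t)=p\cos(\omega_0 t)$ solves~\eqref{eqn:nonstares}, so $v(t)=(A^Tp)\cos(\omega_0 t)$ is a genuine steady state. Hence, if the network is nontrivially synchronous, then \textup{(i)} there is $p\in\ker G$ with $A^Tp\neq 0$, and \textup{(ii)} for every real $p\in\ker G$ the signal $(A^Tp)\cos(\omega_0 t)$ is synchronized; since $|v_k|-|v_\ell|$ is then periodic, it must vanish identically, and evaluating at $t=0$ forces all entries of $A^Tp$ to have a common absolute value.

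Next I would read $\ker G$ off the coupler graph: since $G$ is the Laplacian of $(\V,\setE_{\rm c})$, a vector $p$ lies in $\ker G$ exactly when it is constant on each connected component of $(\V,\setE_{\rm c})$; thus $(A^Tp)_k=p_r-p_s=0$ whenever the oscillator $k$ joins two nodes of the same coupler component, and around any cycle of $(\V,\setE_{\rm o}\cup\setE_{\rm c})$ one has the telescoping identity $\sum_\beta(p_{r_{\beta+1}}-p_{r_\beta})=0$, in which the c-edge terms vanish. Now assume the linkage is not bipartite. If $\setE_{\rm o}\cap\setE_{\rm c}\neq\emptyset$, pick an edge $\{\nu_r,\nu_s\}$ lying in both sets; then $p_r=p_s$ for every $p\in\ker G$, so the oscillator on that edge contributes a zero entry to $A^Tp$, and (ii) then forces $A^Tp=0$ for every $p\in\ker G$. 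If instead some cycle $\C$ of $(\V,\setE_{\rm o}\cup\setE_{\rm c})$ contains an odd number of o-edges, fix any $p\in\ker G$; by (ii) each oscillator entry of $A^Tp$ equals $\pm\xi$ for a single real $\xi$ depending on $p$, so the telescoping sum around $\C$ reduces to a sum of an odd number of terms $\pm\xi$ equal to $0$, forcing $\xi=0$ by parity and again $A^Tp=0$ for every $p\in\ker G$. In either case (i) fails, so by the first paragraph $v(t)\to 0$ for every initial condition; thus the network is not nontrivially synchronous, a contradiction. Therefore a synchronous network has a bipartite linkage.

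The step I expect to be the main obstacle is the first: because the mass matrix $AA^T$ in~\eqref{eqn:nonstares} is singular, the system is of descriptor type, so one must argue carefully that modes with $A^Tp=0$ cannot influence $v$ and that no generalized eigenvectors lie on the imaginary axis --- both of which rest on Remark~\ref{rem:passive}. Once the asymptotic form of $v$ is secured, the remaining graph-theoretic reasoning is light.
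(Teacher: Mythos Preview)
Your proof is correct and follows essentially the same route as the paper's: both isolate the imaginary-axis component of $v$, show it must oscillate at $\omega_{0}$ with the associated node-voltage vector annihilated by $G$ (the paper via the periodicity of the energy $W$ in Remark~\ref{rem:passive}, you via the pencil $((\lambda^{2}+\omega_{0}^{2})AA^{T}+\lambda G)$), use synchrony to force equal-magnitude entries, and then run the same telescoping/parity argument around a cycle. The descriptor-system concern you flag is exactly what the paper handles with the $W$-argument of Remark~\ref{rem:passive}; the remaining differences are purely organizational---you package the necessary conditions as (i)/(ii) and invoke the constant-on-components description of $\ker G$, whereas the paper works with a single explicitly chosen synchronous solution $v(t)=(a_{1},\ldots,a_{q})\sin(\omega_{0}t)$.
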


\begin{proof}
Suppose the network~\eqref{eqn:nonstares} is synchronous. Let
$v(t)=A^{T}e(t)$ be an arbitrary solution satisfying $v(t)\not\to
0$. Recall that $v(t)$ is bounded and satisfies \eqref{eqn:sum}.
Hence $v(t)\not\to 0$ implies that the sum~\eqref{eqn:sum} must
include a term of the form ${\rm Re}(\xi e^{j\omega t})$ for some
$\omega\in\Real$ and nonzero $\xi\in\Complex^{q}$. The network being
LTI, the mapping $t\mapsto{\rm Re}(\xi e^{j\omega t})$ must itself
be a solution. Hence, it might have been that $v(t)={\rm Re}(\xi
e^{j\omega t})$. We henceforth focus on this case. Observe that the
solution $v(t)={\rm Re}(\xi e^{j\omega t})$ is periodic;
consequently, so is the corresponding
$W(t)=\frac{1}{2}\omega_{0}^{2}v(t)^{T}v(t)+\frac{1}{2}{\dot
v}(t)^{T}{\dot v}(t)$. Recall that $W(t)$ is also nonincreasing; see
Remark~\ref{rem:passive}. Hence $W(t)$ must be constant, which
yields $0={\dot W}=-{\dot e}^{T}G{\dot e}$ by \eqref{eqn:Wdot}.
Since $G$ is symmetric positive semidefinite, ${\dot e}^{T}G{\dot
e}=0$ implies $G{\dot e}=0$. Under this condition the
equation~\eqref{eqn:nonstares} simplifies into $AA^{T}({\ddot
e}+\omega_{0}^{2}e)=0$. Multiplying this from left with $({\ddot
e}+\omega_{0}^{2}e)^{T}$ allows us to write $0=({\ddot
e}+\omega_{0}^{2}e)^{T}AA^{T}({\ddot
e}+\omega_{0}^{2}e)=\|A^{T}({\ddot
e}+\omega_{0}^{2}e)\|^{2}=\|{\ddot v}+\omega_{0}^{2}v\|^{2}$. That
is, our choice $v(t)={\rm Re}(\xi e^{j\omega t})$ satisfies ${\ddot
v}+\omega_{0}^{2}v=0$. This at once brings $\omega=\omega_{0}$.
Furthermore, since the network~\eqref{eqn:nonstares} is synchronous,
we should have $v(t)=[v_{1}(t)\ v_{2}(t)\ \cdots\ v_{q}(t)]^{T}$
with $v_{k}(t)=a_{k}\mu\sin(\omega_{0}t+\phi)$, where $\mu>0$,
$\phi\in[0,\,2\pi)$, and $a_{k}\in\{-1,\,1\}$. Without loss of
generality we can let $\mu=1$ and $\phi=0$. Hence we have
established the following. The synchronous
network~\eqref{eqn:nonstares} admits a pair $(v(t),\,e(t))$ such
that
\begin{eqnarray}\label{eqn:ve}
v(t)=[a_{1}\ a_{2}\ \cdots\
a_{q}]^{T}\times\sin(\omega_{0}t)\,,\qquad G{\dot e}(t)=0
\end{eqnarray}
where $a_{k}\in\{-1,\,1\}$ for $k=1,\,2,\,\ldots,\,q$.

Using \eqref{eqn:ve} we now show that the linkage $(\V,\,\setE_{\rm
o},\,\setE_{\rm c})$ is bipartite. To this end, let us first
establish $\setE_{\rm o}\cap\setE_{\rm c}=\emptyset$. Suppose
otherwise. That is, there exists an edge
$\{\nu_{r},\,\nu_{s}\}\in\setE_{\rm o}\cap\setE_{\rm c}$. This means
there are an oscillator (say the $k$th oscillator) and a resistor
(with conductance $g_{rs}>0$) in the network both extending between
the nodes $\nu_{r}$ and $\nu_{s}$. Then we either have
$v_{k}=e_{r}-e_{s}$ or $v_{k}=e_{s}-e_{r}$. Without loss of
generality let $v_{k}=e_{r}-e_{s}$. Since $G{\dot e}=0$ we have
${\dot e}_{r}(t)-{\dot e}_{s}(t)\equiv 0$ by \eqref{eqn:Wdot}. This
however contradicts
\begin{eqnarray*}
{\dot e}_{r}(t)-{\dot e}_{s}(t)={\dot v}_{k}(t)
=a_{k}\omega_{0}\cos(\omega_{0}t) \not\equiv0\,.
\end{eqnarray*}

The second thing we have to show is that no cycle of the graph
$(\V,\,\setE_{\rm o}\cup\setE_{\rm c})$ contains an odd number of
edges from $\setE_{\rm o}$. Again suppose otherwise. Then there is a
cycle with the set of edges
$\setS=\{\{\nu_{r_{1}},\,\nu_{r_{2}}\},\,\{\nu_{r_{2}},\,\nu_{r_{3}}\},\,\ldots,\,\{\nu_{r_{p-1}},\,\nu_{r_{p}}\},\,\{\nu_{r_{p}},\,\nu_{r_{1}}\}\}
\subset\setE_{\rm o}\cup\setE_{\rm c}$, comprising $p$ edges, an odd
number $2m+1\leq p$ of which belongs to $\setE_{\rm o}$. Without
loss of generality let $\{\nu_{r_{p}},\,\nu_{r_{1}}\}\in\setE_{\rm
o}$. Consider the identity
\begin{eqnarray*}
({\dot e}_{r_{1}}-{\dot e}_{r_{p}})=({\dot e}_{r_{1}}-{\dot
e}_{r_{2}})+({\dot e}_{r_{2}}-{\dot e}_{r_{3}})+\cdots+({\dot
e}_{r_{p-1}}-{\dot e}_{r_{p}})\,.
\end{eqnarray*}
Let us remove from the righthand side the terms $({\dot
e}_{r_{\ell}}-{\dot e}_{r_{\ell+1}})$ for which
$\{\nu_{r_{\ell}},\,\nu_{r_{\ell+1}}\}\in\setE_{\rm c}$. We can do
this because such terms satisfy $({\dot e}_{r_{\ell}}-{\dot
e}_{r_{\ell+1}})=0$ thanks to $G{\dot e}=0$ and \eqref{eqn:Wdot}.
Then the above equality simplifies into
\begin{eqnarray}\label{eqn:Eosc}
({\dot e}_{r_{1}}-{\dot
e}_{r_{p}})=\sum_{\{\nu_{r_{\ell}},\,\nu_{r_{\ell+1}}\}\in\setS\cap\setE_{\rm
o}}({\dot e}_{r_{\ell}}-{\dot e}_{r_{\ell+1}})\,.
\end{eqnarray}
Since each term in \eqref{eqn:Eosc} corresponds to some edge in
$\setE_{\rm o}$, we can find indices
$k_{0},\,k_{1},\,\ldots,\,k_{2m}\in\{1,\,2,\,\ldots,\,q\}$ and
coefficients (determined by the polarities of the oscillator
voltages) $c_{0},\,c_{1},\,\ldots,\,c_{2m}\in\{-1,\,1\}$ that allow
us to rewrite \eqref{eqn:Eosc} as
\begin{eqnarray}\label{eqn:akbk}
c_{k_{0}}{\dot v}_{k_{0}}=c_{k_{1}}{\dot v}_{k_{1}}+c_{k_{2}}{\dot
v}_{k_{2}}+\cdots+c_{k_{2m}}{\dot v}_{k_{2m}}\,.
\end{eqnarray}
Combining \eqref{eqn:akbk} and \eqref{eqn:ve} then implies
$a_{k_{0}}c_{k_{0}}=a_{k_{1}}c_{k_{1}}+a_{k_{2}}c_{k_{2}}+\cdots+a_{k_{2m}}c_{k_{2m}}$
which never admits a solution because the lefthand side is always
odd, while the righthand side is even. The result hence follows by
contradiction.
\end{proof}

\begin{theorem}\label{thm:bl}
The network~\eqref{eqn:nonstares} is synchronous if and only if the
linkage $(\V,\,\setE_{\rm o},\,\setE_{\rm c})$ is bilayer and both
its layers are connected.
\end{theorem}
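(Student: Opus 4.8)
The plan is to handle the two implications in turn, using Lemmas~\ref{lem:necessary} and~\ref{lem:equivalence} to take care of the structural part of necessity almost for free.

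\emph{Necessity.} Suppose \eqref{eqn:nonstares} is synchronous. Its linkage is then bipartite by Lemma~\ref{lem:necessary}, hence bilayer by Lemma~\ref{lem:equivalence}; fix a bipartition $(\V_{1},\V_{2})$ of $(\V,\setE_{\rm o})$ and the corresponding two layers, and order the nodes so that $G=\diag(G_{1},G_{2})$ with $G_{i}$ the Laplacian of the $i$-th layer. It remains only to rule out a disconnected layer. If, say, the first layer possessed a connected component $C$ with $\emptyset\neq C\subsetneq\V_{1}$, then the indicator vector $\chi\in\Real^{n}$ of $C$ would satisfy $G\chi=0$, and a one-line substitution shows that $e(t)=\chi\cos\omega_{0}t$ solves \eqref{eqn:nonstares} (indeed $\ddot e+\omega_{0}^{2}e=0$ and $G\dot e=0$). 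Along this solution $v_{k}(t)$ equals $\pm\cos\omega_{0}t$ when the $\V_{1}$-endpoint of the $k$-th oscillator lies in $C$, and is identically zero otherwise. Since every node is incident to at least one oscillator and both $C$ and $\V_{1}\setminus C$ are nonempty, there exist indices $k,\ell$ with $|v_{k}(t)|=|\cos\omega_{0}t|$ and $v_{\ell}(t)\equiv 0$, so that $|v_{k}(t)|-|v_{\ell}(t)|\not\to 0$ --- contradicting synchrony. Hence both layers must be connected.

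\emph{Sufficiency.} Now suppose the linkage is bilayer with both layers connected. Invoking Remark~\ref{rem:polarity}, I would first normalize the polarities so that the positive terminal of every oscillator lies in $\V_{1}$; then, with the node ordering above, $A=\left[\begin{smallmatrix}A_{1}\\ -A_{2}\end{smallmatrix}\right]$ with $A_{1},A_{2}$ class~$\F$, $G=\diag(G_{1},G_{2})$, and connectedness of the two layers yields $\ker G=\operatorname{span}\{\chi_{1},\chi_{2}\}$, where $\chi_{1},\chi_{2}\in\Real^{n}$ are the indicators of $\V_{1},\V_{2}$. The identity that drives the whole argument is $A^{T}\chi_{1}=\one_{q}=-A^{T}\chi_{2}$ (each oscillator has its positive terminal, and only that terminal, in $\V_{1}$), so that $A^{T}(\ker G)=\operatorname{span}\{\one_{q}\}$. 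Given any solution $v=A^{T}e$, Remark~\ref{rem:passive} lets me write $v(t)=v_{\rm dec}(t)+\sum_{\omega\geq 0}{\rm Re}(\xi_{\omega}e^{j\omega t})$, a finite sum with $v_{\rm dec}(t)\to 0$ and constant vectors $\xi_{\omega}$. For each term I set $v_{\omega}(t):={\rm Re}(\xi_{\omega}e^{j\omega t})=A^{T}e_{\omega}(t)$, where $e_{\omega}$ is the corresponding modal component of $e(\cdot)$ --- itself a solution of \eqref{eqn:nonstares}. Because $B=0$, the function $W$ of Remark~\ref{rem:passive} depends only on $v_{\omega},\dot v_{\omega}$, so it is periodic; being nonincreasing along $e_{\omega}$ it is therefore constant, and \eqref{eqn:Wdot} then forces $G\dot e_{\omega}\equiv 0$. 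Substituting back into \eqref{eqn:nonstares} gives $AA^{T}(\ddot e_{\omega}+\omega_{0}^{2}e_{\omega})=0$, hence $\ddot v_{\omega}+\omega_{0}^{2}v_{\omega}=0$; this kills the constant term ($\xi_{0}=0$) and pins every surviving term to $\omega=\omega_{0}$. Moreover $\dot e_{\omega_{0}}(t)\in\ker G$, so by the identity above $\dot v_{\omega_{0}}(t)\in\operatorname{span}\{\one_{q}\}$ for all $t$, which forces $\xi_{\omega_{0}}\in\operatorname{span}_{\Complex}\{\one_{q}\}$. Consequently $v(t)=\one_{q}\,\rho\cos(\omega_{0}t+\psi)+v_{\rm dec}(t)$ for some $\rho,\psi$, whence $\big||v_{k}(t)|-|v_{\ell}(t)|\big|\leq|v_{{\rm dec},k}(t)-v_{{\rm dec},\ell}(t)|\to 0$ for all pairs $(k,\ell)$. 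Nontriviality is then witnessed by the solution $e(t)=\chi_{1}\cos\omega_{0}t$, for which $v(t)=\one_{q}\cos\omega_{0}t\not\to 0$.

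I expect the delicate part to be the modal bookkeeping in the sufficiency argument. It is $v$, not $e$, that Remark~\ref{rem:passive} certifies to be bounded (an $e$-mode along $\ker G\cap\ker A^{T}$ may drift), so the energy computation must be run mode by mode on the $v$-parts $v_{\omega}$ of the modal components $e_{\omega}$, and the $\omega=0$ (constant) mode has to be excluded as a separate sub-case rather than lumped in with the oscillatory ones. Once the identity $A^{T}(\ker G)=\operatorname{span}\{\one_{q}\}$ is in place --- and this is precisely the step where connectedness of \emph{both} layers is used --- the rest is in effect a rerun of the computation in the proof of Lemma~\ref{lem:necessary}.
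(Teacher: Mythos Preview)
Your proof is correct and follows essentially the same route as the paper: both directions rest on the energy identity of Remark~\ref{rem:passive} (periodic $W$ plus $\dot W\leq 0$ forces $G\dot e=0$ on every imaginary-axis mode) together with the identification $A^{T}(\ker G)=\operatorname{span}\{\one_{q}\}$, and both use an explicit sinusoidal solution built from the indicator of a layer (or a component thereof) to handle nontriviality and the connectedness part of necessity. The only difference is organizational---you run the sufficiency argument directly, showing every solution has non-decaying part in $\operatorname{span}\{\one_{q}\}$, whereas the paper argues by contradiction through a two-case split---and your caveat about the $e$-versus-$v$ modal bookkeeping is apt, though the paper is equally informal on that point.
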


\begin{proof}
Let the network~\eqref{eqn:nonstares} be synchronous. Then, by
Lemma~\ref{lem:equivalence} and Lemma~\ref{lem:necessary}, its
linkage $(\V,\,\setE_{\rm o},\,\setE_{\rm c})$ is bilayer. Let the
graphs $(\V_{1},\,\setE_{1})$ and $(\V_{2},\,\setE_{2})$ be a pair
of associated layers. Recall that the pair $(\V_{1},\,\V_{2})$ is
then a bipartition of the graph $(\V,\,\setE_{\rm o})$ and we have
$(\V_{1},\,\setE_{1})\cup(\V_{2},\,\setE_{2})=(\V,\,\setE_{\rm c})$.
Let $n_{1}$ and $n_{2}$ denote the number of nodes in $\V_{1}$ and
$\V_{2}$, respectively. Suppose now one of the layers, say
$(\V_{1},\,\setE_{1})$, is not connected. Then we can find disjoint
subgraphs $(\V_{1}^{\prime},\,\setE_{1}^{\prime})$ and
$(\V_{1}^{\prime\prime},\,\setE_{1}^{\prime\prime})$ satisfying
$(\V_{1}^{\prime},\,\setE_{1}^{\prime})\cup(\V_{1}^{\prime\prime},\,\setE_{1}^{\prime\prime})=(\V_{1},\,\setE_{1})$.
Let $n_{1}^{\prime}$ and $n_{1}^{\prime\prime}$ denote the number of
nodes in $\V_{1}^{\prime}$ and $\V_{1}^{\prime\prime}$,
respectively. Without loss of generality (see
Remark~\ref{rem:polarity}) we can suppose that the Laplacian matrix
$G$ and the incidence matrix $A$ of the
network~\eqref{eqn:nonstares} have the following structures
\begin{eqnarray*}
G=\left[\begin{array}{ccc}G_{1}^{\prime}&0&0 \\
0&G_{1}^{\prime\prime}&0 \\ 0&0&G_{2}\end{array}\right]\,,\qquad
A=\left[\begin{array}{rr}F_{1}^{\prime}&0 \\
0&F_{1}^{\prime\prime} \\
-F_{2}^{\prime}&-F_{2}^{\prime\prime}\end{array}\right]
\end{eqnarray*}
where $G_{1}^{\prime}\in\Real^{n_{1}^{\prime}\times
n_{1}^{\prime}}$,
$G_{1}^{\prime\prime}\in\Real^{n_{1}^{\prime\prime}\times
n_{1}^{\prime\prime}}$, and $G_{2}\in\Real^{n_{2}\times n_{2}}$ are
all separately Laplacian matrices; and
$F_{1}^{\prime}\in\Real^{n_{1}^{\prime}\times q_{1}}$,
$F_{1}^{\prime\prime}\in\Real^{n_{1}^{\prime\prime}\times q_{2}}$,
and $[F_{2}^{\prime}\
F_{2}^{\prime\prime}]=F_{2}\in\Real^{n_{2}\times q}$ are class $\F$
matrices. Consider now the case where the first $n_{1}^{\prime}$
node voltages in the network read $\sin(\omega_{0}t)$ while the
remaining are fixed at zero. The corresponding node voltage vector
is $e(t)=[\one_{n_{1}^{\prime}}^{T}\ \ 0_{1\times
n_{1}^{\prime\prime}}\ \ 0_{1\times
n_{2}}]^{T}\times\sin(\omega_{0}t)$. Note that
\begin{eqnarray*}
G{\dot
e}(t)=\left[\begin{array}{ccc}G_{1}^{\prime}&0&0 \\
0&G_{1}^{\prime\prime}&0 \\
0&0&G_{2}\end{array}\right]\left[\begin{array}{c}\one_{n_{1}^{\prime}}
\\ 0\\ 0\end{array}\right]\omega_{0}\cos(\omega_{0}t)=\left[\begin{array}{c}G_{1}^{\prime}\one_{n_{1}^{\prime}}
\\ 0\\ 0\end{array}\right]\omega_{0}\cos(\omega_{0}t)=0
\end{eqnarray*}
because $G_{1}^{\prime}\one_{n_{1}^{\prime}}=0$. Note also that
${\ddot e}(t)+\omega_{0}^{2}e(t)=0$. Therefore $e(t)$ satisfies
\eqref{eqn:nonstares}, meaning the resulting oscillator voltage
vector $A^{T}e(t)=v(t)=[v_{1}(t)\ v_{2}(t)\ \cdots\ v_{q}(t)]^{T}$
is a possible solution of the network. We can write
\begin{eqnarray*}
v(t)=A^{T}e(t)=\left[\begin{array}{rrr}F_{1}^{\prime
T}&0&-F_{2}^{\prime T}\\ 0&F_{1}^{\prime\prime
T}&-F_{2}^{\prime\prime
T}\end{array}\right]\left[\begin{array}{c}\one_{n_{1}^{\prime}}
\\ 0\\
0\end{array}\right]\sin(\omega_{0}t)=\left[\begin{array}{c}F_{1}^{\prime
T}\one_{n_{1}^{\prime}}
\\ 0\end{array}\right]\sin(\omega_{0}t)=\left[\begin{array}{c}\one_{q_{1}}
\\ 0\end{array}\right]\sin(\omega_{0}t)
\end{eqnarray*}
from which we obtain
\begin{eqnarray}\label{eqn:vk}
v_{k}(t)=\left\{\begin{array}{cl}\sin(\omega_{0}t)&\mbox{for}\
k\in\{1,\,\ldots,\,q_{1}\}\,,\\\vspace{-0.1in} \\ 0 & \mbox{for}\
k\in\{q_{1}+1,\,\ldots,\,q\}\,.\end{array}\right.
\end{eqnarray}
Clearly, \eqref{eqn:vk} contradicts that the
network~\eqref{eqn:nonstares} is synchronous. Hence both layers
$(\V_{1},\,\setE_{1})$ and $(\V_{2},\,\setE_{2})$ must be connected.

Now we show the other direction. This time we start with assuming
the linkage $(\V,\,\setE_{\rm o},\,\setE_{\rm c})$ is bilayer and
both its layers are connected. Therefore, without loss of generality
(see Remark~\ref{rem:polarity}), we can let
\begin{eqnarray*}
G=\left[\begin{array}{cc}G_{1}&0 \\
0&G_{2}\end{array}\right]\,,\qquad
A=\left[\begin{array}{r}F_{1} \\
-F_{2}\end{array}\right]
\end{eqnarray*}
where each of the Laplacian matrices $G_{1}\in\Real^{n_{1}\times
n_{1}}$ and $G_{2}\in\Real^{n_{2}\times n_{2}}$ represents one of
the layers and $F_{1}\in\Real^{n_{1}\times q}$ and
$F_{2}\in\Real^{n_{2}\times q}$ are class-$\F$ matrices. Since the
layers are connected the ranks of the matrices $G_{1}$ and $G_{2}$
are $n_{1}-1$ and $n_{2}-1$, respectively. In particular, we have
${\rm null}\,G_{1}={\rm span}\{\one_{n_{1}}\}$ and ${\rm
null}\,G_{2}={\rm span}\{\one_{n_{2}}\}$. Suppose now the
network~\eqref{eqn:nonstares} is not synchronous. This means (see
the proof of Lemma~\ref{lem:necessary}) either of the following two
cases must take place.

{\em Case~1: There exists a solution of the form $v(t)={\rm Re}(\xi
e^{j\omega_{0}t})$ with nonzero $\xi=[a_{1}\ a_{2}\ \cdots\
a_{q}]^{T}$ satisfying $a_{k}/a_{\ell}\notin\{-1,\,1\}$ for some
$(k,\,\ell)$.} Let $e(t)={\rm Re}(\zeta e^{j\omega_{0}t})$ be the
corresponding node voltage vector, i.e., $A^{T}e(t)=v(t)$. Then let
$\zeta_{1}\in\Complex^{n_{1}}$ and $\zeta_{2}\in\Complex^{n_{2}}$
satisfy $[\zeta_{1}^{T}\ \zeta_{2}^{T}]^{T}=\zeta$. Since $G{\dot
e}(t)=0$ (see the proof of Lemma~\ref{lem:necessary}) we have to
have $G_{1}\zeta_{1}=0$ and $G_{2}\zeta_{2}=0$. Also,
$A^{T}e(t)=v(t)$ implies
$F_{1}^{T}\zeta_{1}-F_{2}^{T}\zeta_{2}=\xi$. Observe that since
$\zeta_{1}\in{\rm span}\{\one_{n_{1}}\}$ and $F_{1}$ is a class-$\F$
matrix, we have $F_{1}^{T}\zeta_{1}\in{\rm span}\{\one_{q}\}$.
Similarly, $F_{2}^{T}\zeta_{2}\in{\rm span}\{\one_{q}\}$. As a
result $\xi\in{\rm span}\{\one_{q}\}$. But this violates our initial
assumption on $\xi$.

{\em Case~2: All solutions satisfy $v(t)\to 0$.} To show that this
case, too, is impossible we construct a counterexample solution
which does not vanish as $t\to\infty$. Let the first $n_{1}$ node
voltages in the network read $\sin(\omega_{0}t)$ while the remaining
are fixed at zero. That is, $e(t)=[\one_{n_{1}}^{T}\ 0_{1\times
n_{2}}]^{T}\times\sin(\omega_{0}t)$. This allows us to write
\begin{eqnarray*}
G{\dot
e}(t)=\left[\begin{array}{cc}G_{1}&0 \\
0&G_{2}\end{array}\right]\left[\begin{array}{c}\one_{n_{1}}
\\ 0\end{array}\right]\omega_{0}\cos(\omega_{0}t)=\left[\begin{array}{c}G_{1}\one_{n_{1}} \\ 0\end{array}\right]\omega_{0}\cos(\omega_{0}t)=0
\end{eqnarray*}
because $G_{1}\one_{n_{1}}=0$. Note also that ${\ddot
e}(t)+\omega_{0}^{2}e(t)=0$. Therefore $e(t)$ satisfies
\eqref{eqn:nonstares}, meaning the resulting oscillator voltage
vector $v(t)=A^{T}e(t)$ is a possible solution of the network. Since
$F_{1}^{T}\one_{n_{1}}=\one_{q}$ we can proceed as
\begin{eqnarray*}
v(t)=A^{T}e(t)=[F_{1}^{T}\
-F_{2}^{T}]\left[\begin{array}{c}\one_{n_{1}}\\
0\end{array}\right]\sin(\omega_{0}t)=F_{1}^{T}\one_{n_{1}}\sin(\omega_{0}t)=\one_{q}\sin(\omega_{0}t)\,.
\end{eqnarray*}
Clearly, $v(t)\not\to 0$. Both cases are now ruled out. Hence the
network~\eqref{eqn:nonstares} has to be synchronous.
\end{proof}

\section{Generalized eigenvalues}\label{sec:reig}

In the previous section we have discovered it is necessary that the
linkage of the network~\eqref{eqn:nonstares} is bilayer for
synchronous behavior. In the remainder of the paper we will assume
this condition for the general framework~\eqref{eqn:nonstar}. Also,
for simplicity of analysis, we will consider the type of networks
where the subspace that contains the oscillator voltage vector
$v=A^{T}e$ is the entire space, i.e., ${\rm
range}(A^{T})=\Real^{q}$. To sum up, we henceforth make

\begin{assumption}\label{assume:bilayer}
The following hold.
\begin{itemize}
\item The linkage of the network~\eqref{eqn:nonstar} is bilayer.
\item ${\rm rank}(A)=q$.
\end{itemize}
\end{assumption}

Let $P,Q\in\Complex^{n\times n}$. Recall that a generalized
eigenvalue $\lambda\in\Complex$ of the pair $(P,\,Q)$ satisfies
$(P-\lambda Q)x=0$ for some $x\neq 0$, where $x\in\Complex^{n}$.
Observe that if $(P,\,Q)$ is a Laplacian pair, which implies
$P\one_{n}=Q\one_{n}=0$, then the corresponding set of eigenvalues
according to this definition is the entire complex plane $\Complex$.
To avoid this kind of outrage, we introduce a slightly modified
version.

\begin{definition}
Let $P,Q\in\Complex^{n\times n}$. A {\em restricted generalized
eigenvalue} $\lambda\in\Complex$ of the pair $(P,\,Q)$ satisfies
\begin{eqnarray*}
(P-\lambda Q)x=0\quad\mbox{for some}\quad Qx\neq0
\end{eqnarray*}
where $x\in\Complex^{n}$. The set of all restricted generalized
eigenvalues is denoted by ${\rm reig}(P,\,Q)$.
\end{definition}

In this section we study the problem of computing ${\rm
reig}(G+jB,\,AA^{T})$ for the network~\eqref{eqn:nonstar}. To this
end, we propose a method to reduce this generalized eigenvalue
problem to a standard eigenvalue problem. Then, in the next section,
we uncover the link between these generalized eigenvalues and
network synchronization.

Consider the network~\eqref{eqn:nonstar} under
Assumption~\ref{assume:bilayer}. Since the linkage of the network is
bilayer we will henceforth let, without loss of generality (see
Remark~\ref{rem:polarity}), the matrices $G,\,B\in\Real^{n\times n}$
and $A\in\Real^{n\times q}$ have the following structures
\begin{eqnarray}\label{eqn:forms}
G=\left[\begin{array}{cc}G_{1}&0 \\
0&G_{2}\end{array}\right]\,,\qquad
B=\left[\begin{array}{cc}B_{1}&0 \\
0&B_{2}\end{array}\right]\,,\qquad
A=\left[\begin{array}{r}F_{1} \\
-F_{2}\end{array}\right]
\end{eqnarray}
where the submatrices $G_{1},\,B_{1}\in\Real^{n_{1}\times n_{1}}$
and $G_{2},\,B_{2}\in\Real^{n_{2}\times n_{2}}$ are all Laplacians
and $F_{1}\in\Real^{n_{1}\times q}$ and $F_{2}\in\Real^{n_{2}\times
q}$ are class $\F$ matrices. We now investigate the properties of
the solution $(E,\,Y)$ of the following equation
\begin{eqnarray}\label{eqn:Lambda}
\underbrace{\left[\begin{array}{cc} G+jB & -A\\A^{T}& 0
\end{array}\right]}_{M}\left[\begin{array}{c} E \\ Y
\end{array}\right]=\underbrace{\left[\begin{array}{c} 0 \\ I
\end{array}\right]}_{N}
\end{eqnarray}
where $E\in\Complex^{n\times q}$ and $Y\in\Complex^{q\times q}$.

\begin{theorem}\label{thm:Lambda}
The equation~\eqref{eqn:Lambda} admits a solution $(E,\,Y)$ with a
unique $Y$. Moreover, the following hold.
\begin{enumerate}
\item $Y=Y^{T}$.
\item Each eigenvalue $\lambda$ of $Y$ satisfies ${\rm Re}(\lambda)\geq
0$ and ${\rm Im}(\lambda)\geq 0$.
\item $Y\one_{q}=0$
\item If $B=0$ then $Y$ is real and $Y\geq 0$.
\end{enumerate}
\end{theorem}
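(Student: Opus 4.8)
The plan is to treat \eqref{eqn:Lambda} as a linear matrix equation and to deduce everything from the kernel of $M$ together with a closed-form expression for $Y$.

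\emph{Existence and uniqueness.} First I would compute $\ker M$. Suppose $(G+jB)z-Aw=0$ and $A^{T}z=0$ for some $z\in\Complex^{n}$, $w\in\Complex^{q}$. Left-multiplying the first identity by $\bar z^{T}$ and using $\bar z^{T}A=(A^{T}\bar z)^{T}=0$ gives $\bar z^{T}Gz+j\bar z^{T}Bz=0$; since $G$ and $B$ are real, symmetric and positive semidefinite, the real and imaginary parts must vanish separately, so $Gz=Bz=0$, hence $Aw=(G+jB)z=0$, hence $w=0$ because ${\rm rank}(A)=q$ (Assumption~\ref{assume:bilayer}). Thus every element of $\ker M$ has zero $Y$-block, which already gives uniqueness of $Y$. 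The same computation run on the conjugate transpose $\bar M^{T}$ (a sign flip on $B$, which is immaterial) shows its kernel consists of pairs $(p,0)$ with $A^{T}p=0$; every element of this kernel is orthogonal to each column of $N$, so by the Fredholm alternative \eqref{eqn:Lambda} is solvable. This proves the first assertion.

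\emph{A formula for $Y$; properties 1 and 2.} From $A^{T}E=I$ one gets $E^{T}A=I$ and, conjugating, $\bar E^{T}A=I$. Left-multiplying $(G+jB)E=AY$ by $E^{T}$ and then by $\bar E^{T}$ gives $Y=E^{T}(G+jB)E=\bar E^{T}(G+jB)E$. The first equality together with $(G+jB)^{T}=G+jB$ yields $Y=Y^{T}$ (property 1). For property 2, if $Y\xi=\lambda\xi$ with $\xi\neq0$, set $\eta=E\xi$; the second equality gives $\lambda\|\xi\|^{2}=\bar\xi^{T}Y\xi=\bar\eta^{T}G\eta+j\bar\eta^{T}B\eta$, and since $\bar\eta^{T}G\eta\ge0$ and $\bar\eta^{T}B\eta\ge0$ are real, ${\rm Re}(\lambda)\ge0$ and ${\rm Im}(\lambda)\ge0$.

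\emph{Properties 3 and 4.} For property 3 I would invoke the block forms \eqref{eqn:forms}: left-multiplying $(G+jB)E-AY=0$ by $[\,\one_{n_{1}}^{T}\ \ 0\,]$ annihilates the Laplacian block $G_{1}+jB_{1}$ and, using $F_{1}^{T}\one_{n_{1}}=\one_{q}$, leaves $\one_{q}^{T}Y=0$; with $Y=Y^{T}$ this is $Y\one_{q}=0$. For property 4, when $B=0$ the matrices $M$ and $N$ are real, so conjugating \eqref{eqn:Lambda} shows $(\bar E,\bar Y)$ is also a solution; uniqueness forces $\bar Y=Y$, i.e.\ $Y$ is real. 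A real symmetric matrix whose spectrum lies in the closed right half-plane (property 2) is positive semidefinite, so $Y\ge0$.

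\emph{Main obstacle.} The only delicate point is the kernel/cokernel analysis underpinning existence and uniqueness: one must track the non-Hermitian symmetry of $G+jB$ and the conjugations correctly, and observe that it is precisely the constraint $A^{T}z=0$ that kills the cross term $\bar z^{T}Aw$, so that the positive semidefiniteness of $G$ and $B$ can be applied to the real and imaginary parts independently. Everything downstream of that is bookkeeping with the identities $E^{T}A=\bar E^{T}A=I$.
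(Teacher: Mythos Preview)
Your proof is correct and follows essentially the same route as the paper: the same kernel/cokernel computation for existence and uniqueness, and the same quadratic-form identity $\lambda\|\xi\|^{2}=\bar\eta^{T}G\eta+j\,\bar\eta^{T}B\eta$ for the eigenvalue bounds. The minor packaging differences --- you obtain symmetry from the closed form $Y=E^{T}(G+jB)E$ whereas the paper shows $Y_{1}=Y_{2}^{T}$ directly for any two solutions, and for $Y\one_{q}=0$ you left-multiply by $[\one_{n_{1}}^{T}\ 0]$ where the paper right-multiplies by $[0\ \one_{n_{2}}^{T}]^{T}$ --- are equivalent rearrangements of the same observations.
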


\begin{proof}
{\em Existence.} A solution $(E,\,Y)$ exists for \eqref{eqn:Lambda}
if and only if ${\rm range}(M)\supset{\rm range}(N)$, which is
equivalent to
\begin{eqnarray}\label{eqn:establish}
{\rm null}(M^{*})\subset {\rm null}(N^{*})\,.
\end{eqnarray}
To establish \eqref{eqn:establish} suppose otherwise, that is, ${\rm
null}(M^{*})\not\subset{\rm null}(N^{*})$. This implies that we can
find a nonzero vector $\eta$ satisfying $M^{*}\eta=0$ and
$N^{*}\eta\neq 0$. Let us partition this vector as
$\eta=[\eta_{1}^{T}\ \eta_{2}^{T}]^{T}$ where
$\eta_{1}\in\Complex^{n}$ and $\eta_{2}\in\Complex^{q}$. Expanding
$M^{*}\eta=0$ gives us
\begin{eqnarray}\label{eqn:eta1}
(G-jB)\eta_{1}+A\eta_{2}&=&0\\
-A^{T}\eta_{1}&=&0\,.\nonumber
\end{eqnarray}
which allows us to write
\begin{eqnarray*}
\eta_{1}^{*}G\eta_{1}-j\eta_{1}^{*}B\eta_{1}
=\eta_{1}^{*}(G-jB)\eta_{1} =-\eta_{1}^{*}A\eta_{2}
=(-A^{T}\eta_{1})^{*}\eta_{2} =0\,.
\end{eqnarray*}
Since the Laplacians $G$ and $B$ are symmetric positive semidefinite
we have $\eta_{1}^{*}G\eta_{1}=0$ and $\eta_{1}^{*}B\eta_{1}=0$,
which implies $G\eta_{1}=0$ and $B\eta_{1}=0$. Revisiting
\eqref{eqn:eta1} with this information lets us see $A\eta_{2}=0$,
whence we infer $\eta_{2}=0$ because $A$ is full column rank by
Assumption~\ref{assume:bilayer}. But this contradicts $N^{*}\eta\neq
0$ because $N^{*}\eta=\eta_{2}$.

{\em Uniqueness \& symmetry.} Let the pairs $(E_{1},\,Y_{1})$ and
$(E_{2},\,Y_{2})$ both satisfy \eqref{eqn:Lambda}. We can write
\begin{eqnarray}\label{eqn:uniqueness}
Y_{1}&=&[0\ \ I]\left[\begin{array}{r}-E_{1}\\ Y_{1}\end{array}\right]\nonumber\\
&=&[E_{2}^{T}\ Y_{2}^{T}]\left[\begin{array}{cc}G+jB&A\\
-A^{T}&0\end{array}\right]\left[\begin{array}{r}-E_{1}\\ Y_{1}\end{array}\right]\nonumber\\
&=&[E_{2}^{T}\ Y_{2}^{T}]\left[\begin{array}{c}0\\ I\end{array}\right]\nonumber\\
&=&Y_{2}^{T}\,.
\end{eqnarray}
The choice $(E_{1},\,Y_{1})=(E_{2},\,Y_{2})$ gives us at once the
symmetry $Y_{1}=Y_{1}^{T}$. Then, thanks to this symmetry,
\eqref{eqn:uniqueness} implies the uniqueness $Y_{1}=Y_{2}$.

{\em Eigenvalues.} Let $\lambda\in\Complex$ be an eigenvalue of $Y$
and $v\in\Complex^{q}$ be the corresponding unit eigenvector, i.e.,
$Y v=\lambda v$ and $v^{*}v=1$. Multiplying both sides of
\eqref{eqn:Lambda} by $v$ from right and letting $e=Ev$ we obtain
\begin{eqnarray*}
(G+jB)e-\lambda Av&=&0\\
A^{T}e&=&v\,.
\end{eqnarray*}
Using these identities we can write $\lambda=\lambda v^{*}v
=\lambda(e^{*}A)v =e^{*}(\lambda Av) =e^{*}Ge+je^{*}Be$. Since $G$
and $B$ are symmetric positive semidefinite, it follows that ${\rm
Re}(\lambda)\geq 0$ and ${\rm Im}(\lambda)\geq 0$.

{\em Null space.} Let $u\in\Real^{n}$ be $u=[0_{1\times n_{1}}\
\one_{n_{2}}^{T}]^{T}$. Using \eqref{eqn:forms} and the identities
$G_{2}\one_{n_{2}}=0$, $B_{2}\one_{n_{2}}=0$,
$F_{2}^{T}\one_{n_{2}}=\one_{q}$ we can write
\begin{eqnarray*}
\left[\begin{array}{c}0\\
\one_{q}\end{array}\right]=\left[\begin{array}{c}0_{n_{1}\times 1} \\ 0_{n_{2}\times 1}\\
\one_{q}\end{array}\right]=\left[\begin{array}{ccc}G_{1}+jB_{1}&0&F_{1}\\
0&G_{2}+jB_{2}&-F_{2}\\
-F_{1}^{T}&F_{2}^{T}&0\end{array}\right]\left[\begin{array}{c}0 \\
\one_{n_{2}}
\\ 0\end{array}\right]=\left[\begin{array}{cc}G+jB&A\\
-A^{T}&0\end{array}\right]\left[\begin{array}{c}u
\\
0\end{array}\right]\,.
\end{eqnarray*}
Then we have
\begin{eqnarray*}
Y \one_{q}&=&[E^{T}\
Y]\left[\begin{array}{c}0\\\one_{q}\end{array}\right]\\
&=&[E^{T}\
Y]\left[\begin{array}{cc}G+jB&A\\
-A^{T}&0\end{array}\right]\left[\begin{array}{c}u
\\
0\end{array}\right]\\
&=&[0\ \ I]\left[\begin{array}{c}u
\\
0\end{array}\right]\\
&=&0\,.
\end{eqnarray*}
thanks to \eqref{eqn:Lambda} and the symmetry of $Y$.

{\em Positive semidefiniteness.} Let $B=0$. Now that the matrix $M$
is real, a real solution $(E,\,Y)$ exists for \eqref{eqn:Lambda}.
Then $Y\in\Real^{q\times q}$ by uniqueness. To show that $Y\geq 0$
let us write (for real $E$)
\begin{eqnarray*}
Y&=&[E^{T}\ Y^{T}]\left[\begin{array}{c}0\\
I\end{array}\right]\\
&=&[E^{T}\ Y^{T}]\left[\begin{array}{cc} G & -A\\A^{T}& 0
\end{array}\right]\left[\begin{array}{c}E\\
Y\end{array}\right]\\
&=&E^{T}GE
\end{eqnarray*}
whence the positive semidefiniteness follows by $G\geq0$.
\end{proof}
\vspace{0.12in}

Theorem~\ref{thm:Lambda} motivates the following definition.

\begin{definition}
The {\em effective Laplacian} of the network~\eqref{eqn:nonstar} is
defined as
\begin{eqnarray*}
Y=[0\ \ I]\left[\begin{array}{cc} G+jB & -A\\A^{T}& 0
\end{array}\right]^{+}\left[\begin{array}{c}0 \\ I
\end{array}\right]
\end{eqnarray*}
which satisfies the properties 1-4 listed in
Theorem~\ref{thm:Lambda}.
\end{definition}

\begin{remark}
When $n_{1}=n_{2}$ and $F_{1}=F_{2}=I$, the effective Laplacian $Y$
equals the {\em parallel sum} of the matrices $Y_{1}=G_{1}+jB_{1}$
and $Y_{2}=G_{2}+jB_{2}$, i.e., $Y=Y_{1}(Y_{1}+Y_{2})^{+}Y_{2}$; see
\cite{berkics17}.
\end{remark}

\begin{remark}
The solution $v(t)$ of the resistively coupled
network~\eqref{eqn:nonstares} satisfies ${\ddot
v+\omega_{0}^{2}v}+Y{\dot v}=0$.
\end{remark}

We end this section with the observation that the generalized
eigenvalues of the pair $(G+jB,\,AA^{T})$ coincide with the
eigenvalues of the effective Laplacian of the network. This will
allow us to work with $Y$ instead of the pair $(G+jB,\,AA^{T})$ in
the next section.

\begin{theorem}
${\rm reig}(G+jB,\,AA^{T})={\rm eig}(Y)$.
\end{theorem}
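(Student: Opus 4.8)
I want to show ${\rm reig}(G+jB,\,AA^{T})={\rm eig}(Y)$ by proving the two inclusions separately, using the defining equation~\eqref{eqn:Lambda} for the pair $(E,\,Y)$ as the bridge. The key algebraic fact I will exploit is that from~\eqref{eqn:Lambda} we get the two block identities $(G+jB)E - AY = 0$ and $A^{T}E = I$; the second says $E$ is a right inverse of $A^{T}$, so $A^{T}$ is surjective (consistent with ${\rm rank}(A)=q$) and $AA^{T}E = AY$.

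\textbf{Step 1: ${\rm eig}(Y)\subseteq{\rm reig}(G+jB,\,AA^{T})$.} Let $\lambda\in{\rm eig}(Y)$ with $Yv=\lambda v$, $v\neq 0$. Following the ``Eigenvalues'' computation in the proof of Theorem~\ref{thm:Lambda}, set $e = Ev$. Then $(G+jB)e = AYv = \lambda Av$ and $A^{T}e = v$. Hence $(G+jB)e = \lambda Av = \lambda A(A^{T}e) = \lambda AA^{T}e$, i.e.\ $(G+jB - \lambda AA^{T})e = 0$. It remains to check the nondegeneracy condition $AA^{T}e\neq 0$: if $AA^{T}e = 0$ then $0 = e^{*}AA^{T}e = \|A^{T}e\|^{2} = \|v\|^{2}$, contradicting $v\neq 0$. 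So $\lambda\in{\rm reig}(G+jB,\,AA^{T})$.

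\textbf{Step 2: ${\rm reig}(G+jB,\,AA^{T})\subseteq{\rm eig}(Y)$.} Let $\lambda\in{\rm reig}(G+jB,\,AA^{T})$, so $(G+jB - \lambda AA^{T})x = 0$ with $AA^{T}x\neq 0$; in particular $A^{T}x\neq 0$. Set $v = A^{T}x\in\Complex^{q}$, which is nonzero. The natural move is to show $Yv = \lambda v$. Multiply the identity by $E^{T}$ on the left: from $(G+jB)x = \lambda AA^{T}x$ we get $E^{T}(G+jB)x = \lambda E^{T}AA^{T}x = \lambda (A^{T}E)^{T}A^{T}x = \lambda A^{T}x = \lambda v$, using $A^{T}E = I$ hence $E^{T}A^{T} = I$. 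On the other hand, transposing the block identity $(G+jB)E = AY$ and using symmetry of $G+jB$ and of $Y$ gives $E^{T}(G+jB) = YA^{T}$, so $E^{T}(G+jB)x = YA^{T}x = Yv$. Combining, $Yv = \lambda v$ with $v\neq 0$, i.e.\ $\lambda\in{\rm eig}(Y)$.

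\textbf{Main obstacle.} The two inclusions are essentially one-line manipulations once one has the block identities $A^{T}E = I$ and $(G+jB)E = AY = E^{T}(G+jB)$ in hand; the only place demanding a little care is making sure the side conditions match up on the two sides of the equivalence — namely that ``$AA^{T}x\neq 0$'' on the ${\rm reig}$ side corresponds exactly to ``$v = A^{T}x\neq 0$'' on the ${\rm eig}$ side, which works because $A$ has full column rank (so $A^{T}x\neq 0 \iff AA^{T}x\neq 0$), and symmetrically that the eigenvector $v$ of $Y$ produces a vector $e=Ev$ with $AA^{T}e\neq 0$. I expect no genuine difficulty beyond bookkeeping; the substance was already done in establishing the existence, uniqueness and symmetry of $Y$ in Theorem~\ref{thm:Lambda}.
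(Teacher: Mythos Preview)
Your proof is correct and follows essentially the same approach as the paper's: both directions rest on the identities $(G+jB)E=AY$ and $A^{T}E=I$ extracted from~\eqref{eqn:Lambda}, together with the symmetry of $G+jB$ and of $Y$; the paper packages Step~2 as a block-matrix manipulation while you unfold it into the equivalent line $E^{T}(G+jB)=YA^{T}$, but the content is identical. One small typo: in Step~2 you write ``$E^{T}A^{T}=I$'' where you mean (and correctly use) $E^{T}A=(A^{T}E)^{T}=I$.
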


\begin{proof}
Let $\lambda\in{\rm eig}(Y)$ and $v\in\Complex^{q}$ be the
corresponding eigenvector, i.e., $Y v=\lambda v$. Let $e=Ev$ where
$E$ satisfies \eqref{eqn:Lambda}. Now, multiplying both sides of
\eqref{eqn:Lambda} from right by $v$ and using $Y v=\lambda v$ and
$e=Ev$ we obtain
\begin{eqnarray*}
\left[\begin{array}{cc} G+jB & -A\\A^{T}& 0
\end{array}\right]\left[\begin{array}{c} e \\ \lambda v
\end{array}\right]=\left[\begin{array}{c} 0 \\ v
\end{array}\right]\,.
\end{eqnarray*}
Hence we can write $(G+jB)e-\lambda Av=0$ and $v=A^{T}e$.
Substituting $v$ in the first equation by $A^{T}e$ yields
$(G+jB-\lambda AA^{T})e=0$. Being an eigenvector, $v\neq 0$. As a
result $A^{T}e\neq 0$ which implies $AA^{T}e\neq 0$. This means
$\lambda\in{\rm reig}(G+jB,\,AA^{T})$. Therefore ${\rm
reig}(G+jB,\,AA^{T})\supset{\rm eig}(Y)$.

We now show the other direction. Let $\lambda\in{\rm
reig}(G+jB,\,AA^{T})$. Then we can find $e\in\Complex^{n}$
satisfying $(G+jB-\lambda AA^{T})e=0$ and $AA^{T}e\neq 0$. The
latter implies $A^{T}e\neq 0$. Letting the nonzero vector $v=A^{T}e$
we can cast $(G+jB-\lambda AA^{T})e=0$ into
\begin{eqnarray}\label{eqn:cast}
\left[\begin{array}{cc} G+jB & A\\-A^{T}& 0
\end{array}\right]\left[\begin{array}{c} -e \\ \lambda v
\end{array}\right]=\left[\begin{array}{c} 0 \\ v
\end{array}\right]\,.
\end{eqnarray}
Using \eqref{eqn:cast}, \eqref{eqn:Lambda}, and the symmetry of $Y$
we can write
\begin{eqnarray*}
Y v&=&[E^{T}\ Y]\left[\begin{array}{c} 0 \\ v
\end{array}\right]\\
&=&[E^{T}\ Y]\left[\begin{array}{cc} G+jB & A\\-A^{T}& 0
\end{array}\right]\left[\begin{array}{c} -e \\ \lambda v
\end{array}\right]\\
&=&\left(\left[\begin{array}{cc} G+jB & -A\\A^{T}& 0
\end{array}\right]\left[\begin{array}{c} E \\ Y
\end{array}\right]\right)^{T}\left[\begin{array}{c} -e \\ \lambda v
\end{array}\right]\\
&=&[0\ \ I]\left[\begin{array}{c} -e \\ \lambda v
\end{array}\right]\\
&=&\lambda v\,.
\end{eqnarray*}
Therefore ${\rm reig}(G+jB,\,AA^{T})\subset{\rm eig}(Y)$. The result
then follows.
\end{proof}

\section{RL coupling}

In Section~\ref{sec:reig} we considered an eigenvalue problem
focusing on the generalized eigenvalues of the pair
$(G+jB,\,AA^{T})$ associated to the network~\eqref{eqn:nonstar} that
has $q$ oscillators. We introduced a $q\times q$ matrix $Y$ called
the effective Laplacian of the network and established that the
above mentioned generalized eigenvalues coincide with the
eigenvalues of $Y$. Our analysis there was a preparation necessary
for our present investigation of the collective behavior of the
coupled oscillators of the network~\eqref{eqn:nonstar}. Now we will
show that whether the oscillators asymptotically synchronize or not
can be determined through the spectrum of the effective Laplacian.
As before, here, too, we posit Assumption~\ref{assume:bilayer} holds
and the matrices $G,\,B,\,A$ satisfy \eqref{eqn:forms}.

\begin{remark}\label{rem:nominal}
It is not difficult to see that $e(t)=[\one_{n_{1}}^{T}\
0_{n_{2}\times 1}]^{T}\times\sin(\omega_{0}t)$ and
$v(t)=\one_{q}\sin(\omega_{0}t)$ satisfy \eqref{eqn:nonstar}.
\end{remark}

Below is our main result.

\begin{theorem}\label{thm:sync}
The network~\eqref{eqn:nonstar} is synchronous if and only if the
effective Laplacian $Y$ has a single eigenvalue on the imaginary
axis.
\end{theorem}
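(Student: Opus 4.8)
The plan is to read off synchrony from the modes of the linear network and to match the purely imaginary ones with the imaginary-axis eigenvalues of the effective Laplacian $Y$; throughout, ``$Y$ has a single eigenvalue on the imaginary axis'' is read as ``$0$ is the only eigenvalue of $Y$ on the imaginary axis and $\ker Y={\rm span}\{\one_q\}$''. First I would invoke Remark~\ref{rem:passive}: every solution is bounded, every exponent $\lambda_k$ in \eqref{eqn:sum} has ${\rm Re}(\lambda_k)\le0$, and those with ${\rm Re}(\lambda_k)=0$ carry constant coefficients. Hence $v(t)-p(t)\to0$ for a trigonometric polynomial $p(t)=\sum_j{\rm Re}(\xi_je^{j\omega_jt})$ with finitely many real $\omega_j$ and constant $\xi_j\in\Complex^q$, and $|v_k(t)|-|v_\ell(t)|\to0$ for all $(k,\ell)$ iff $|p_k(t)|-|p_\ell(t)|\to0$ for all $(k,\ell)$. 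Since Remark~\ref{rem:nominal} already supplies a nontrivial solution, the network is synchronous iff the asymptotic part $p$ of every solution satisfies $|p_k(t)|-|p_\ell(t)|\to0$ for all $(k,\ell)$.

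Next I would build the dictionary between imaginary-axis modes and $Y$. Plugging $e(t)={\rm Re}(\zeta e^{j\omega t})$ into \eqref{eqn:nonstar} gives $\big((\omega_0^2-\omega^2)AA^T+j\omega G+B\big)\zeta=0$; multiplying by $\zeta^*$ and separating real and imaginary parts shows that a mode with nontrivial $v=A^T\zeta$ forces $\omega\ne0$ and then $G\zeta=0$, whence $B\zeta=(\omega^2-\omega_0^2)AA^T\zeta$ and, using $G\zeta=0$ once more, $(G+jB)\zeta=j(\omega^2-\omega_0^2)AA^T\zeta$ with $AA^T\zeta\ne0$. By the identity ${\rm reig}(G+jB,\,AA^T)={\rm eig}(Y)$ established above, this means $j(\omega^2-\omega_0^2)\in{\rm eig}(Y)$. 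Conversely, given $j\beta\in{\rm eig}(Y)$ (so $\beta\ge0$ by Theorem~\ref{thm:Lambda}(2)) and $\xi\in\ker(Y-j\beta I)$, setting $\zeta=E\xi$ with $(E,Y)$ as in \eqref{eqn:Lambda} and reversing the computation yields a genuine solution ${\rm Re}(\zeta e^{j\omega t})$ with $\omega=\sqrt{\omega_0^2+\beta}$ and oscillator vector ${\rm Re}(\xi e^{j\omega t})$. So the nontrivial imaginary-axis modes are exactly the maps ${\rm Re}(\xi e^{j\omega t})$ with $j\beta:=j(\omega^2-\omega_0^2)\in{\rm eig}(Y)$, $\beta\ge0$, $\xi\in\ker(Y-j\beta I)$. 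The reason this works cleanly is that $G\zeta=0$ on the imaginary axis kills the $j\omega G$ term, which is exactly why the relevant complex matrix is $G+jB$. I would also record, from the same computation at $\beta=0$, that $\ker Y=A^T(\ker G\cap\ker B)$, a \emph{real} subspace that always contains $\one_q$ (Theorem~\ref{thm:Lambda}(3)), and that $Y\one_q=0$ implies $\one_q\notin\ker(Y-\lambda I)$ for every $\lambda\ne0$.

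The direction ($\Leftarrow$) is then short: if $\ker Y={\rm span}\{\one_q\}$ and $Y$ has no other imaginary eigenvalue, every nontrivial imaginary-axis mode has frequency $\omega_0$ and shape a multiple of $\one_q$, so the asymptotic part of any solution is $p(t)=(\alpha\cos\omega_0t+\gamma\sin\omega_0t)\one_q$; thus $p_k\equiv p_\ell$ and $|v_k(t)|-|v_\ell(t)|\to0$ for all $(k,\ell)$, and with Remark~\ref{rem:nominal} the network is synchronous. For ($\Rightarrow$) I would prove the contrapositive, noting (via Remark~\ref{rem:passive}, which rules out Jordan structure on the imaginary axis) that failure of the hypothesis means either $\dim\ker Y\ge2$ or $Y$ has an eigenvalue $j\beta$ with $\beta>0$. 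In the second case, take $0\ne\xi\in\ker(Y-j\beta I)$, note $\xi\notin{\rm span}\{\one_q\}$, and consider the solution $v(t)=\one_q\sin(\omega_0t)+{\rm Re}(\xi e^{j\omega_1t})$ with $\omega_1=\sqrt{\omega_0^2+\beta}\ne\omega_0$; if $|v_k(t)|-|v_\ell(t)|\to0$ then, $v_k,v_\ell$ being bounded, the trigonometric polynomial $v_k^2-v_\ell^2=(v_k-v_\ell)(v_k+v_\ell)$ tends to $0$ and hence vanishes identically (e.g.\ by looking at its mean square over $[0,T]$ as $T\to\infty$); since $v_k+v_\ell$ has the nonzero $\omega_0$-harmonic $2\sin\omega_0t$ it is not identically zero, and trigonometric polynomials form an integral domain, so $v_k-v_\ell\equiv0$, i.e.\ $\xi_k=\xi_\ell$; holding for all $(k,\ell)$ this puts $\xi\in{\rm span}\{\one_q\}$, a contradiction. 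In the first case pick a real $\xi'\in\ker Y\setminus{\rm span}\{\one_q\}$: if its components do not all share the same modulus, ${\rm Re}(\xi'e^{j\omega_0t})$ already fails synchrony; otherwise $\xi'=\rho\epsilon$ with $\epsilon\in\{-1,1\}^q$, $\epsilon\ne\pm\one_q$, $\rho\ne0$, and then $\one_q+\xi'\in\ker Y$ has components $1\pm\rho$ of distinct modulus, so ${\rm Re}((\one_q+\xi')e^{j\omega_0t})$ fails synchrony; either way \eqref{eqn:nonstar} is not synchronous. I expect the second paragraph's correspondence, together with turning the absolute-value condition ``$|v_k|-|v_\ell|\to0$'' into an algebraic condition on the mode shapes, to be the main obstacle; everything else is bookkeeping on top of Theorem~\ref{thm:Lambda}, the ${\rm reig}$ identity, and the passivity remark.
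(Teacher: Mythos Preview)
Your proof is correct and follows the same route as the paper's Lemmas~\ref{lem:rolls}--\ref{lem:royce}: identify the purely imaginary modes of the network with the imaginary-axis eigenvectors of $Y$ via ${\rm reig}(G+jB,AA^T)={\rm eig}(Y)$, use Remark~\ref{rem:nominal} for nontriviality, and in the contrapositive superpose the nominal solution $\one_q\sin(\omega_0 t)$ with a rogue mode to exhibit non-synchrony. Your contrapositive is in fact more explicit than the paper's---the integral-domain step for the two-frequency case and the $\one_q+\xi'$ trick for the $\dim\ker Y\ge 2$ case fill in what the paper condenses to ``through which we see that the network cannot be synchronous, because $\bar v\notin{\rm span}\{\one_q\}$''.

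One small correction: the parenthetical invoking Remark~\ref{rem:passive} to rule out a Jordan block at $0$ for $Y$ is misplaced; passivity constrains the network's modes, not the matrix $Y$. The correct tool (which the paper uses inside Lemma~\ref{lem:royce}) is the symmetry $Y=Y^{T}$ from Theorem~\ref{thm:Lambda}(1): a generalized eigenvector $Yw=\one_q$ would give $0=(Y\one_q)^{T}w=\one_q^{T}(Yw)=\one_q^{T}\one_q=q$, a contradiction. With that fix, algebraic and geometric multiplicities at $0$ coincide and your reading of ``single eigenvalue on the imaginary axis'' as ``$\ker Y={\rm span}\{\one_q\}$ and no other imaginary-axis eigenvalue'' is justified.
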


We prove this theorem in two steps.

\begin{lemma}\label{lem:rolls}
The network~\eqref{eqn:nonstar} is not synchronous if and only if
there exist $\omega\in\Real$, ${\bar e}\in\Complex^{n}$, and ${\bar
v}\in\Complex^{q}\setminus{\rm span}\{\one_{q}\}$ satisfying
\begin{subeqnarray}\label{eqn:ebar}
((\omega_{0}^{2}-\omega^{2})AA^{T}+B){\bar e}&=&0\\
G{\bar e}&=&0\\
A^{T}{\bar e}&=&{\bar v}\,.
\end{subeqnarray}
\end{lemma}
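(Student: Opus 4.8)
The plan is to characterize the obstruction to synchrony in terms of the persistent modes of the LTI system \eqref{eqn:nonstar}, exactly in the spirit of the proof of Lemma~\ref{lem:necessary}. First I would recall from Remark~\ref{rem:passive} that every solution $v(t)$ is a finite sum $\sum_k{\rm Re}(\pi_k(t)e^{\lambda_k t})$ with ${\rm Re}(\lambda_k)\le0$, and that the terms with ${\rm Re}(\lambda_k)=0$ have constant (degree-zero) polynomial coefficients. By Remark~\ref{rem:nominal} the nominal solution $v(t)=\one_q\sin(\omega_0 t)$ is always present, so the network fails to be synchronous precisely when there is, in addition, some non-decaying mode that is not a scalar multiple of $\one_q$ — i.e., a solution of the form $v(t)={\rm Re}({\bar v}e^{j\omega t})$ with ${\bar v}\notin{\rm span}\{\one_q\}$. (One has to be a little careful about the $|v_k|-|v_\ell|\to0$ wording of Definition~\ref{def:sync}: a non-decaying term proportional to $\one_q$ but at a frequency $\omega\ne\omega_0$ would still produce unequal magnitudes in general, so I would argue, as in Lemma~\ref{lem:necessary}, that any surviving purely-imaginary mode at frequency $\omega$ must in fact force $\omega=\pm\omega_0$ unless it is a zero mode; this is where the passivity computation \eqref{eqn:Wdot} is reused.)

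The core of the argument is then the equivalence between ``$v(t)={\rm Re}({\bar v}e^{j\omega t})$ is a solution with corresponding $e(t)={\rm Re}({\bar e}e^{j\omega t})$'' and the algebraic system \eqref{eqn:ebar}. For the forward direction: if $v(t)={\rm Re}({\bar v}e^{j\omega t})$ is a bounded solution, it is periodic, hence so is the associated $W(t)$ from Remark~\ref{rem:passive}; since $W$ is nonincreasing it must be constant, so ${\dot W}=-{\dot e}^TG{\dot e}=0$ along this solution, and positive semidefiniteness of $G$ gives $G{\dot e}=0$, hence $G{\bar e}=0$ (the second equation of \eqref{eqn:ebar}). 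Plugging $e(t)={\rm Re}({\bar e}e^{j\omega t})$ into \eqref{eqn:nonstar} and using $G{\dot e}=0$ leaves $AA^T({\ddot e}+\omega_0^2 e)+Be=0$, which in phasor form is exactly the first equation $((\omega_0^2-\omega^2)AA^T+B){\bar e}=0$; and $v=A^Te$ gives the third equation $A^T{\bar e}={\bar v}$. The converse is a direct substitution: given $(\omega,{\bar e},{\bar v})$ solving \eqref{eqn:ebar}, set $e(t)={\rm Re}({\bar e}e^{j\omega t})$, $v(t)=A^Te(t)={\rm Re}({\bar v}e^{j\omega t})$, and check that all terms of \eqref{eqn:nonstar} vanish — $G{\dot e}=0$ by the second equation, and $AA^T({\ddot e}+\omega_0^2 e)+Be$ is the real part of $(((\omega_0^2-\omega^2)AA^T+B){\bar e})e^{j\omega t}=0$. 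Since ${\bar v}\notin{\rm span}\{\one_q\}$, this solution exhibits non-synchrony.

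The main obstacle I anticipate is the ``only if'' direction, specifically the reduction from ``some non-decaying mode other than the nominal one exists'' to ``a mode of the clean single-frequency form ${\rm Re}({\bar v}e^{j\omega t})$ with ${\bar v}\notin{\rm span}\{\one_q\}$ exists.'' One must (i) use LTI-ness to isolate a single term ${\rm Re}(\xi e^{j\omega t})$ as itself a solution — routine, as in Lemma~\ref{lem:necessary}; (ii) rule out the degenerate possibility that every surviving purely-imaginary mode lies in ${\rm span}\{\one_q\}$ and yet the network is still non-synchronous — here I would observe that a surviving mode ${\rm Re}(c\one_q e^{j\omega t})$ with $\omega\ne\pm\omega_0$, superimposed on the nominal $\one_q\sin\omega_0 t$, still violates the magnitude condition, and such a mode with $\omega=0$ (a constant) would be a nonzero ${\bar v}=\one_q$ with $A^T{\bar e}=\one_q$, $B{\bar e}=0$, $G{\bar e}=0$, $\omega_0^2AA^T{\bar e}=0$, forcing $AA^T{\bar e}=0$ hence $A^T{\bar e}=0$, a contradiction — so in fact no such degenerate case arises and the only relevant frequency for a span-$\one_q$ mode is $\pm\omega_0$, which is already the nominal one. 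Handling the possibility of non-simple eigenvalues (Jordan blocks on the imaginary axis) is excluded up front by Remark~\ref{rem:passive}, which is the reason that passivity observation was stated. Once these bookkeeping points are dispatched, the equivalence with \eqref{eqn:ebar} is the straightforward substitution described above.
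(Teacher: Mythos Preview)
Your plan is correct and tracks the paper's proof closely. The one technical difference is in how you extract $G\bar e=0$: you reuse the energy argument of Lemma~\ref{lem:necessary} ($W$ periodic and nonincreasing, hence constant, hence $G\dot e=0$), whereas the paper substitutes the phasor directly into \eqref{eqn:nonstar} to obtain $(\omega_{0}^{2}-\omega^{2})AA^{T}\bar e+j\omega G\bar e+B\bar e=0$, left-multiplies by $\bar e^{*}$ (with $\|\bar v\|=1$), and reads off $\bar e^{*}G\bar e=0$ and $\omega^{2}\ge\omega_{0}^{2}$ from the imaginary and real parts of the resulting scalar identity. Both routes are valid; the paper's algebraic version automatically rules out $\omega=0$, which in your version needs a separate remark to pass from $G\dot e=0$ to $G\bar e=0$.

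One small gap in your converse: you conclude ``since $\bar v\notin{\rm span}\{\one_{q}\}$, this solution exhibits non-synchrony,'' but that does not follow from $v(t)={\rm Re}(\bar v e^{j\omega t})$ alone---for instance $\bar v=(1,-1,\ldots)^{T}$ gives $|v_{1}(t)|\equiv|v_{2}(t)|$. The paper closes this by explicitly superposing with the nominal solution of Remark~\ref{rem:nominal}, forming $\hat v(t)={\rm Re}(\bar v e^{j\omega t})+\one_{q}\sin(\omega_{0}t)$, and it is this combined solution that witnesses the failure of $|v_{k}|-|v_{\ell}|\to 0$. You already invoke this superposition idea in your opening paragraph, so the fix is just to carry it into the backward step.
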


\begin{proof}
Suppose the network~\eqref{eqn:nonstar} is not synchronous. Then, by
Remark~\ref{rem:passive} and Remark~\ref{rem:nominal}, there must
exist a solution $v(t)={\rm Re}({\bar v} e^{j\omega t})$ satisfying
\eqref{eqn:nonstar} with some $e(t)={\rm Re}({\bar e} e^{j\omega
t})$, where $\omega\in\Real$ and ${\bar v}\notin{\rm
span}(\one_{q})$. Substituting this particular pair $(v(t),\,e(t))$
into \eqref{eqn:nonstar} yields
\begin{subeqnarray}\label{eqn:substitute}
(\omega_{0}^{2}-\omega^{2})AA^{T}{\bar e}+j\omega G{\bar e}+B{\bar e}&=&0\\
{\bar v}&=&A^{T}{\bar e}\,.
\end{subeqnarray}
Without loss of generality let ${\bar v}$ be a unit vector, i.e.,
${\bar v}^{*}{\bar v}=1$. Now, multiplying (\ref{eqn:substitute}a)
from left with ${\bar e}^{*}$ and rearranging the terms allow us to
write
\begin{eqnarray}\label{eqn:see}
\omega^{2}-\omega_{0}^{2}={\bar e}^{*}B{\bar e}+j\omega{\bar
e}^{*}G{\bar e}\,.
\end{eqnarray}
Since $G$ and $B$ are symmetric positive semidefinite, we have
${\bar e}^{*}B{\bar e}\geq 0$ and ${\bar e}^{*}G{\bar e}\geq 0$.
Then \eqref{eqn:see} lets us see that $\omega\geq \omega_{0}$,
${\bar e}^{*}G{\bar e}=0$, and consequently $G{\bar e}=0$. Finally,
combining $G{\bar e}=0$ with \eqref{eqn:substitute} yields
\eqref{eqn:ebar}.

To show the other direction suppose \eqref{eqn:ebar} is satisfied by
some choice of parameters $\omega$, ${\bar e}$, and ${\bar
v}\notin{\rm span}\{\one_{q}\}$. Construct the signals $v(t)={\rm
Re}({\bar v} e^{j\omega t})$ and $e(t)={\rm Re}({\bar e} e^{j\omega
t})$, which clearly satisfy \eqref{eqn:nonstar}. Hence $v(t)={\rm
Re}({\bar v} e^{j\omega t})$ is a possible solution. Then (since the
network is LTI) by Remark~\ref{rem:nominal} the signal ${\hat
v}(t)={\rm Re}({\bar v} e^{j\omega t})+\one_{q}\sin(\omega_{0}t)$ is
also a possible solution, through which we see that the
network~\eqref{eqn:nonstar} cannot be synchronous, because ${\bar
v}\notin{\rm span}\{\one_{q}\}$.
\end{proof}

\begin{lemma}\label{lem:royce}
There exist $\omega\in\Real$, ${\bar e}\in\Complex^{n}$, and ${\bar
v}\in\Complex^{q}\setminus{\rm span}\{\one_{q}\}$ satisfying
\eqref{eqn:ebar} if and only if $Y$ has two or more eigenvalues on
the imaginary axis.
\end{lemma}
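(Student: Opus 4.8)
The plan is to connect condition~\eqref{eqn:ebar} to the eigenvalue $\mu = \omega^{2}-\omega_{0}^{2}\ge 0$ of a suitable pencil, and then to identify eigenvalues of $Y$ lying on the imaginary axis with precisely such real parameters $\mu$. First I would reformulate~\eqref{eqn:ebar}: given $\omega,\bar e,\bar v$ as in the statement, set $\lambda = j\omega$ (so ${\rm Re}(\lambda)=0$); then the three equations~\eqref{eqn:ebar} say $G\bar e=0$, $B\bar e = (\omega^{2}-\omega_{0}^{2})AA^{T}\bar e$, and $\bar v = A^{T}\bar e \notin {\rm span}\{\one_{q}\}$. Multiplying and rearranging exactly as in the proof of Theorem~\ref{thm:Lambda} (the ``Eigenvalues'' and ``Null space'' parts), I would show that $(G+jB)\bar e = j\omega \,AA^{T}\bar e$, i.e. $j\omega \in {\rm reig}(G+jB,\,AA^{T})$ with the eigenvector $\bar v = A^{T}\bar e$; and by the preceding theorem ${\rm reig}(G+jB,\,AA^{T})={\rm eig}(Y)$, so $Y\bar v = j\omega\bar v$. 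Since $Y\one_{q}=0$ (property~3 of Theorem~\ref{thm:Lambda}), $0$ is always an eigenvalue of $Y$ on the imaginary axis; the hypothesis $\bar v\notin{\rm span}\{\one_{q}\}$ then forces $Y$ to have an eigenvalue $j\omega$ on the imaginary axis that is \emph{not} accounted for by $\one_q$ alone — hence at least two imaginary eigenvalues (counting multiplicity, or a purely imaginary nonzero one).

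For the converse, suppose $Y$ has two or more eigenvalues on the imaginary axis. Using property~2 of Theorem~\ref{thm:Lambda} (every eigenvalue $\lambda$ of $Y$ has ${\rm Re}(\lambda)\ge 0$ and ${\rm Im}(\lambda)\ge 0$), an eigenvalue on the imaginary axis has the form $\lambda = j\omega'$ with $\omega'\ge 0$. Since $0$ is always such an eigenvalue, having ``two or more'' means either there is a nonzero imaginary eigenvalue $j\omega'$ with $\omega'>0$, or $0$ is a repeated eigenvalue; in either case there is an eigenvector $\bar v$ of $Y$ with $Y\bar v = j\omega'\bar v$ and $\bar v\notin{\rm span}\{\one_{q}\}$. (In the repeated-$0$ case I would have to be slightly careful: I want an eigenvector, not just a generalized eigenvector, outside ${\rm span}\{\one_q\}$; here symmetry of $Y$ saves me — $Y=Y^{T}$ means the geometric and algebraic multiplicities of $0$ agree, so a repeated eigenvalue $0$ yields a two-dimensional eigenspace and I can pick $\bar v$ in it independent of $\one_q$.) Then, running the argument of the previous theorem backwards, from $Y\bar v = j\omega'\bar v$ I recover $\bar e = E\bar v$ with $(G+jB)\bar e = j\omega' AA^{T}\bar e$ and $A^{T}\bar e = \bar v$. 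Taking real and imaginary parts (recall $G,B,A$ are real) gives $G\bar e = \omega'^{-1}{\rm Im}\big((G+jB)\bar e\big)$-type identities; more directly, splitting $(G+jB-j\omega' AA^{T})\bar e = 0$ into Hermitian real/imaginary components and using $G,B\ge 0$ together with $\bar e^{*}G\bar e = \bar e^{*}B\bar e = \cdots$ as in~\eqref{eqn:see}, I get $G\bar e = 0$ and $B\bar e = (\omega'^{2}-\omega_{0}^{2})AA^{T}\bar e$ after setting $\omega = \omega'$ — wait, I need $\omega^{2}-\omega_0^2 = \omega'$, not $\omega'^2$; so the correct substitution is to choose $\omega$ with $\omega^{2} = \omega_{0}^{2} + \omega'$ (possible since $\omega'\ge 0$). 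With this $\omega$, the triple $(\omega,\bar e,\bar v)$ satisfies~\eqref{eqn:ebar} with $\bar v\notin{\rm span}\{\one_q\}$, as required.

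The main obstacle I anticipate is bookkeeping the relationship between the \emph{eigenvalue of $Y$} and the \emph{frequency} $\omega$: an eigenvalue $j\omega'$ of $Y$ corresponds not to frequency $\omega'$ but to a frequency $\omega$ with $\omega^{2}-\omega_{0}^{2}=\omega'$, because $Y$ plays the role of the damping operator in $\ddot v + \omega_{0}^{2}v + Y\dot v = 0$ (cf. the remark following the effective-Laplacian definition), so substituting $v(t)={\rm Re}(\bar v e^{j\omega t})$ produces the characteristic relation $-\omega^{2} + \omega_{0}^{2} + j\omega\lambda = 0$, i.e. $\lambda = j(\omega^{2}-\omega_{0}^{2})/\omega$. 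I would need to handle the case $\omega = 0$ separately (then $\lambda$ is forced to be $0$ or the relation degenerates, and $\bar e$ must lie in the common kernel), and verify that the nonnegativity ${\rm Im}(\lambda)\ge 0$ from Theorem~\ref{thm:Lambda} is exactly what guarantees a \emph{real} solution $\omega$ exists. The secondary subtlety is the multiplicity argument for the repeated-zero case, which I resolve via the symmetry $Y=Y^{T}$ noted above. Everything else is a direct translation between~\eqref{eqn:ebar} and the eigenvalue equation for $Y$, reusing the computations already performed in the proofs of Theorem~\ref{thm:Lambda} and the subsequent ${\rm reig}$/${\rm eig}$ identification.
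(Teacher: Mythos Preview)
Your strategy is essentially the paper's: relate \eqref{eqn:ebar} to an eigenvalue $j\mu$ of $Y$ via $\mu=\omega^{2}-\omega_{0}^{2}$, use $Y\one_{q}=0$ as the always-present imaginary eigenvalue, and extract $G\bar e=0$ from the quadratic form $\bar e^{*}(G+jB)\bar e$. The $j\omega$ versus $j\mu$ confusion in your first paragraph you eventually sort out yourself; just note that the ODE $\ddot v+\omega_{0}^{2}v+Y\dot v=0$ you lean on for intuition is stated only for $B=0$, so it is the wrong guide in the general case---work directly with the pencil and the identification ${\rm reig}(G+jB,\,AA^{T})={\rm eig}(Y)$, as you in fact do.

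There is, however, a genuine gap in your repeated-zero case. You assert that $Y=Y^{T}$ forces the geometric and algebraic multiplicities of $0$ to coincide, but $Y$ is complex symmetric, not Hermitian, and complex symmetric matrices need not be diagonalizable: $\left[\begin{smallmatrix}1&i\\i&-1\end{smallmatrix}\right]$ is symmetric, nilpotent of index two, and has a single $2\times 2$ Jordan block at $0$. The paper does not invoke any general diagonalizability; it argues specifically for the eigenvector $\one_{q}$: if $\one_{q}$ were the only eigenvector at $0$ there would exist a generalized eigenvector $w$ with $Yw=\one_{q}$, and then symmetry (transpose, not conjugate transpose) yields the contradiction $0=(Y\one_{q})^{T}w=\one_{q}^{T}Yw=\one_{q}^{T}\one_{q}=q$. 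Replace your multiplicity claim with this short computation and the proof goes through.
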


\begin{proof}
By Theorem~\ref{thm:Lambda} we have $Y\one_{q}=0$. Therefore
$\lambda_{1}=0$ is an eigenvalue of $Y$ with the eigenvector
$\one_{q}$. Suppose now this eigenvalue at the origin is not the
only eigenvalue on the imaginary axis. That is, there exists a
second eigenvalue $\lambda_{2}=j\mu$ with $\mu\in\Real$. (We note
that $\mu\geq 0$ by Theorem~\ref{thm:Lambda}.) This implies there
exists a unit eigenvector ${\bar v}\notin{\rm span}\,\{\one_{q}\}$
satisfying $Y{\bar v}=j\mu{\bar v}$. This is obvious if
$\lambda_{2}\neq 0$. To see that it is still true even if the
eigenvalue at the origin is repeated (i.e., $\lambda_{2}=0$) suppose
otherwise. That is, $\one_{q}$ is the only eigenvector for the
repeated eigenvalue at the origin. This requires that there exists a
generalized eigenvector $w$ satisfying $Y w=\one_{q}$. But then the
symmetry $Y=Y^{T}$ produces the contradiction
$0=(Y\one_{q})^{T}w=\one_{q}^{T}(Y w)=\one_{q}^{T}\one_{q}=q$.
Consider now \eqref{eqn:Lambda}, which is satisfied with some
$E\in\Complex^{n\times q}$. Let ${\bar e}=E{\bar v}$. Multiplying
both sides of \eqref{eqn:Lambda} from right by ${\bar v}$ yields
\begin{eqnarray*}
\left[\begin{array}{cc} G+jB & -A\\A^{T}& 0
\end{array}\right]\left[\begin{array}{c} {\bar e} \\ j\mu{\bar v}
\end{array}\right]=\left[\begin{array}{c} 0 \\ {\bar v}
\end{array}\right]
\end{eqnarray*}
whence we extract
\begin{subeqnarray}\label{eqn:rain}
G{\bar e}+j(B{\bar e}-\mu A{\bar v})&=&0\\
A^{T}{\bar e}&=&{\bar v}
\end{subeqnarray}
Using these identities and ${\bar v}^{*}{\bar v}=1$ we can write
\begin{eqnarray*}
0
&=&{\bar e}^{*}G{\bar e}+j{\bar e}^{*}(B{\bar e}-\mu A{\bar v})\\
&=&{\bar e}^{*}G{\bar e}+j({\bar e}^{*}B{\bar e}-\mu (A^{T}{\bar
e})^{*}{\bar v})\\
&=&{\bar e}^{*}G{\bar e}+j({\bar e}^{*}B{\bar e}-\mu)\,.
\end{eqnarray*}
Since $G$ and $B$ are symmetric positive semidefinite matrices, we
have to have ${\bar e}^{*}G{\bar e}=0$ yielding
\begin{eqnarray}\label{eqn:Gbare}
G{\bar e}=0
\end{eqnarray}
Using \eqref{eqn:Gbare}, \eqref{eqn:rain}, and letting
$\omega=\sqrt{\omega_{0}^{2}+\mu}$ we can write
\begin{eqnarray}\label{eqn:17a}
((\omega_{0}^{2}-\omega^{2})AA^{T}+B){\bar e}&=&0
\end{eqnarray}
Combining (\ref{eqn:rain}b), \eqref{eqn:Gbare}, and \eqref{eqn:17a}
then yields \eqref{eqn:ebar}.

Now we show the other direction. Suppose \eqref{eqn:ebar} holds for
some $\omega\in\Real$, ${\bar e}\in\Complex^{n}$, and ${\bar
v}\in\Complex^{q}\setminus{\rm span}\,\{\one_{q}\}$. Defining the
real number $\mu=\omega^{2}-\omega_{0}^{2}$ we can mold
\eqref{eqn:ebar} into
\begin{eqnarray*}
\left[\begin{array}{cc} G+jB & A\\-A^{T}& 0
\end{array}\right]\left[\begin{array}{c} -{\bar e} \\ j\mu{\bar v}
\end{array}\right]=\left[\begin{array}{c} 0 \\ I
\end{array}\right]{\bar v}\,.
\end{eqnarray*}
Choose some $E\in\Complex^{n\times q}$ satisfying
\eqref{eqn:Lambda}. Using the symmetries of $G,\,B,\,Y$ we can write
\begin{eqnarray*}
Y {\bar v}&=&[E^{T}\ Y]\left[\begin{array}{c} 0 \\ I
\end{array}\right]{\bar v}\\
&=&[E^{T}\ Y]\left[\begin{array}{cc} G+jB & A\\-A^{T}& 0
\end{array}\right]\left[\begin{array}{c} -{\bar e} \\ j\mu{\bar v}
\end{array}\right]\\
&=&[0\ \ I]\left[\begin{array}{c} -{\bar e} \\ j\mu{\bar v}
\end{array}\right]\\
&=&j\mu{\bar v}\,.
\end{eqnarray*}
Recall $Y\one_{q}=0$. Then $Y{\bar v}=j\mu{\bar v}$ implies $Y$ has
at least two eigenvalues on the imaginary axis because ${\bar
v}\notin{\rm span}\,\{\one_{q}\}$ and $\mu$ is real.
\end{proof}

\vspace{0.12in}

\noindent {\bf Proof of Theorem~\ref{thm:sync}.} Combine
Lemma~\ref{lem:rolls} and
Lemma~\ref{lem:royce}.\hfill\null\hfill$\blacksquare$

\section{An example}\label{sec:ex}

In this section we provide an illustration of
Theorem~\ref{thm:sync}, where an example network of four harmonic
oscillators under bilayer RL coupling is studied.

\begin{figure}[h]
\begin{center}
\includegraphics[scale=0.5]{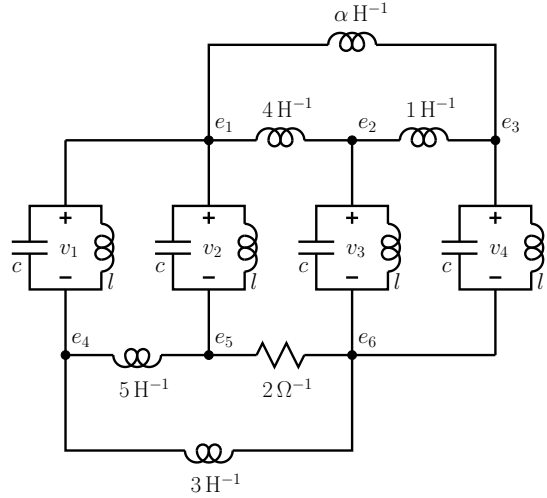}
\caption{A network of identical LC tanks under bilayer RL
coupling.}\label{fig:LCbilayer}
\end{center}
\end{figure}

Consider the network of $q=4$ coupled LC tanks shown in
Fig.~\ref{fig:LCbilayer}. The network has $n=6$ nodes. For the
labeling shown in the figure the matrices $G\in\Real^{6\times 6}$,
$B\in\Real^{6\times 6}$, and $A\in\Real^{6\times 4}$ enjoy the
structures given in \eqref{eqn:forms} where the submatrices read
\begin{eqnarray*}
\begin{array}{rclrclrcl}
G_{1}&=&\left[\begin{array}{rrr}0&0&0\\0&0&0\\0&0&0\end{array}\right]\,,\quad
&B_{1}&=&\left[\begin{array}{crc}\alpha+4&-4&-\alpha\\-4&5&-1\\-\alpha&-1&\alpha+1\end{array}\right]\,,\quad
&F_{1}&=&\left[\begin{array}{rrrr}1&1&0&0\\0&0&1&0\\0&0&0&1\end{array}\right]\\
\\
G_{2}&=&\left[\begin{array}{rrr}0&0&0\\0&2&-2\\0&-2&2\end{array}\right]\,,\quad
&B_{2}&=&\left[\begin{array}{rrr}8&-5&-3\\-5&5&0\\-3&0&3\end{array}\right]\,,\quad
&F_{2}&=&\left[\begin{array}{rrrr}1&0&0&0\\0&1&0&0\\0&0&1&1\end{array}\right]\,.
\end{array}
\end{eqnarray*}
For the parameter choice $\alpha=1$ the eigenvalues of the effective
Laplacian $Y\in\Complex^{4\times 4}$ can be computed to be
$\{\lambda_{1}=
0,\,\lambda_{2}=0.5795+j1.8886,\,\lambda_{3}=0.6283+j4.1990,\,\lambda_{4}=1.4393
+j11.3242$. Since $\lambda_{1}=0$ is the only eigenvalue on the
imaginary axis, by Theorem~\ref{thm:sync} we can say that the
oscillators will asymptotically synchronize when $\alpha=1$. For
$\alpha=4$, however, the eigenvalues read
$\lambda_{1}=0,\,\lambda_{2}=j6,\,\lambda_{3}=1.1989+j11.3818,\,\lambda_{4}=1.3931+j2.3622$.
This time there are two eigenvalues on the imaginary axis, namely,
$\lambda_{1}=0$ and $\lambda_{2}=j6$. Therefore the oscillators are
not guaranteed to synchronize for this case. This example tells us
that synchronization cannot be determined merely by the structure of
the coupling. In other words, without the actual parameter values,
knowing only which oscillator is connected to which and by what type
of connector is in general not sufficient to make definite
conclusions about the collective behavior of the oscillators.

\section{Conclusion}

In this paper we studied networks of coupled LC tanks with nonstar
oscillator graph. We first considered the special case, where the
coupling is purely resistive. For such networks we showed that the
interconnection has to be bilayer in order for the oscillator
voltages to asymptotically synchronize. Then we moved on to analyze
the general case (where both resistive coupling and inductive
coupling are simultaneously active) under bilayer coupling
structure. The layered architecture generates six real matrices
(three for each layer) which do not readily tell whether the
oscillators achieve synchronization or not. To overcome this
difficulty we proposed a method to construct a single complex matrix
(called the effective Laplacian) out of those six matrices and
presented a simple test to study synchronization. The test is this.
The oscillators synchronize if and only if the effective Laplacian
has a single eigenvalue on the imaginary axis.

\bibliographystyle{plain}
\bibliography{references}
\end{document}